\let\origsection=\section \def\section{\@ifstar{\origsection*}{\mysection}} 
\def\mysection{\@startsection{section}{1}\z@{.7\linespacing\@plus\linespacing}{.5\linespacing}{\normalfont\scshape\centering\S}}
\renewcommand{\PrintDOI}[1]{\doi{#1}}
\numberwithin{equation}{section}
\def\rmlabel{\upshape({\itshape \roman*\,})}
\def\RMlabel{\upshape(\Roman*)}
\def\alabel{\upshape({\itshape \alph*\,})}
\def\aplabel{\upshape({\itshape \alph*\,$'$})}
\let\polishlcross=\l
\def\l{\ifmmode\ell\else\polishlcross\fi}
\def\qand{\quad\text{and}\quad}
\def\qqand{\qquad\text{and}\qquad}
\let\emptyset=\varnothing
\let\setminus=\smallsetminus
\let\subset\subseteq
\let\log=\ln
\newcommand\mc{\mathop{\textrm{\rm mc}}\nolimits}
\def\moverlay{\mathpalette\mov@rlay}
\def\mov@rlay#1#2{\leavevmode\vtop{   \baselineskip\z@skip \lineskiplimit-\maxdimen
   \ialign{\hfil$\m@th#1##$\hfil\cr#2\crcr}}}
\newcommand{\charfusion}[3][\mathord]{
    #1{\ifx#1\mathop\vphantom{#2}\fi
        \mathpalette\mov@rlay{#2\cr#3}
      }
    \ifx#1\mathop\expandafter\displaylimits\fi}
\DeclareMathOperator{\dom}{{\rm dom}}
\newcommand{\dcup}{\charfusion[\mathbin]{\cup}{\cdot}}
\DeclareFontFamily{U}  {MnSymbolC}{}
\DeclareSymbolFont{MnSyC}         {U}  {MnSymbolC}{m}{n}
\DeclareFontShape{U}{MnSymbolC}{m}{n}{
    <-6>  MnSymbolC5
   <6-7>  MnSymbolC6
   <7-8>  MnSymbolC7
   <8-9>  MnSymbolC8
   <9-10> MnSymbolC9
  <10-12> MnSymbolC10
  <12->   MnSymbolC12}{}
\DeclareMathSymbol{\powerset}{\mathord}{MnSyC}{180}
\def\namedlabel#1#2{\begingroup
    #2%
    \def\@currentlabel{#2}%
    \phantomsection\label{#1}\endgroup
}
\newtheorem{theorem}             {Theorem}[section]
\newtheorem{lemma}     	[theorem] {Lemma}
\newtheorem{definition}	[theorem] {Definition}   
\newtheorem{proposition}[theorem] {Proposition}   
\newtheorem{fact}	[theorem] {Fact}     
\newtheorem{claim}	[theorem] {Claim}  
\newtheoremstyle{remark}  {2pt}  {4pt}  {\rm}  {}  {\bfseries}  {.}  {.3em}          {}
\theoremstyle{remark}
\newtheorem{remark}	[theorem] {Remark}
\let\eps=\varepsilon
\let\theta=\vartheta
\let\rho=\varrho
\let\phi=\varphi
\def\NN{\mathds N}
\def\QQ{\mathds Q}
\def\RR{\mathds R}
\def\PP{\mathds P}
\def\EE{\mathds E}
\def\ra{\longrightarrow}
\def\red{\text{\rm red}}
\def\blue{\text{\rm blue}}
\def\green{\text{\rm green}}
\def\R{\text{\rm red}}
\def\B{\text{\rm blue}}
\begin{document}

\title{Monochromatic trees in random graphs}

\author[Y. Kohayakawa]{Yoshiharu Kohayakawa}

\author[G. O. Mota]{Guilherme Oliveira Mota}
\address{Instituto de Matem\'atica e Estat\'{\i}stica, Universidade de
   S\~ao Paulo, S\~ao Paulo, Brazil}
\email{\{\,yoshi\,|\,mota\,\}@ime.usp.br}

\author[M. Schacht]{Mathias Schacht}
\address{Fachbereich Mathematik, Universit\"at Hamburg, Hamburg, Germany}
\email{schacht@math.uni-hamburg.de}

\thanks{%
  The first author was partially supported by FAPESP
  (Proc.~2013/03447-6, 2013/07699-0), by CNPq (Proc.~459335/2014-6,
  310974/2013-5) and by Project MaCLinC/USP.  The second author was
  supported by FAPESP (Proc.~2013/11431-2, 2013/20733-2) and partially
  by CNPq (Proc.~459335/2014-6).  The collaboration of the authors was
  supported by CAPES/DAAD PROBRAL (Proc. 430/15) and by FAPESP
  (Proc.~2013/03447-6).}

\begin{abstract}
  Bal and DeBiasio [\textit{Partitioning random graphs into monochromatic
  components}, Electron. J. Combin.~\textbf{24} (2017), Paper 1.18] put forward a
  conjecture concerning the threshold for the following Ramsey-type
  property for graphs~$G$: every $k$-colouring of the edge set of~$G$
  yields~$k$ pairwise vertex disjoint monochromatic trees that
  partition the whole vertex set of~$G$.  We determine the threshold
  for this property for two colours.
\end{abstract}

\keywords{Random graphs, Ramsey theory}
\subjclass[2010]{05C80 (05D10, 05C55)}

\maketitle

\section{Introduction}
For a graph $G=(V,E)$ we write $G\ra\Pi_2$ if for every 2-colouring of $E$, say with colours red and blue, 
there exist two monochromatic trees $T_1$, $T_2\subseteq G$ such that 
\[
	V(T_1)\dcup V(T_2)=V\,,
\] 
i.e., $V$ can be split into two sets each inducing a spanning
monochromatic component.  Here we allow one of the trees to be empty
and we also allow both trees to be monochromatic of the same colour.
In~\cite{BaDe}*{Conjecture~8.1} Bal and DeBiasio conjectured that if
\[
	p=p(n)>(1+\eps)\left(\frac{2\ln n}{n}\right)^{1/2}
\] 
for some $\eps>0$,
then \emph{asymptotically almost surely $($a.a.s.$)$} the binomial random graph~$G(n,p)$ satisfies $G(n,p)\ra\Pi_2$, i.e.,
\[
	\lim_{n\to\infty}\PP\big(G(n,p)\ra\Pi_2\big)=1\,.
\]
One can observe that the conjectured condition 
on~$p$ would be best possible. In fact, if $p<(1-\eps)\big(\frac{2\ln n}{n}\big)^{1/2}$ 
for some $\eps>0$, then a.a.s.\ $G(n,p)$ has diameter at least three (see, e.g.,~\cite{Bo01}*{Chapter~10})
and, hence, it
contains two non-adjacent vertices $u$ and $v$ with disjoint neighbourhoods.
Colouring all edges incident to $u$ or $v$ red and all other edges blue 
produces a colouring that requires at least three monochromatic trees
in any decomposition of~$V(G(n,p))$,
since $u$ and $v$ cannot be in the same red tree.

Bal and DeBiasio showed that a.a.s.~$G(n,p)\ra\Pi_2$ provided that
$p>C\big(\frac{\log n}{n}\big)^{1/3}$ for some suitable constant $C>1$.
We improve on that result by showing that~$\big(\frac{\log
  n}{n}\big)^{1/2}$ is the threshold for that property.

\begin{theorem}\label{thm:main}
	If $p=p(n)\gg\big(\frac{\log n}{n}\big)^{1/2}$, then a.a.s.\ $G(n,p)\ra\Pi_2$.
\end{theorem}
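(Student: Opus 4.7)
The plan is to reformulate $G\ra\Pi_2$ via K\"onig's theorem. Given a $2$-colouring of $E(G)$, I would form the bipartite graph $H$ whose sides are the red components and the blue components of the colouring, with $R\sim B$ in $H$ iff $R\cap B\neq\emptyset$. The vertex set of $G$ can be partitioned into two monochromatic trees (possibly empty) precisely when $H$ admits a vertex cover of size at most two, which by K\"onig's theorem is equivalent to $H$ having matching number at most two. A matching of size three in $H$ corresponds exactly to three vertices $v_1,v_2,v_3$ lying in pairwise distinct red components \emph{and} pairwise distinct blue components; I call such a triple \emph{bad}. Hence it suffices to show that a.a.s.\ no $2$-colouring of $G(n,p)$ admits a bad triple. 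A standard Chernoff estimate combined with a union bound will give that a.a.s.\ $G=G(n,p)$ satisfies $\deg(v)=(1+o(1))np$ for every $v$ and $|N(u)\cap N(v)|=(1+o(1))np^2\gg\log n$ for every pair $u\neq v$; in particular $G$ has diameter~$2$.

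\textbf{Structure of a bad triple.} Assuming these pseudorandom properties, suppose for a contradiction that some $2$-colouring admits a bad triple $v_1,v_2,v_3$. First, $\{v_1,v_2,v_3\}$ must be independent in $G$ (a coloured edge between two $v_i$'s would place them in a common monochromatic component), and $N(v_1)\cap N(v_2)\cap N(v_3)=\emptyset$ (a common neighbour $w$ would force the three coloured edges $v_iw$ to be pairwise distinctly coloured, impossible with two colours). Writing $W_{ij}=N(v_i)\cap N(v_j)$, the sets $W_{12},W_{13},W_{23}$ are therefore pairwise disjoint and each has $(1+o(1))np^2\gg\log n$ vertices. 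Moreover, for every $w\in W_{ij}$ the edges $v_iw$ and $v_jw$ must have opposite colours, which splits $W_{ij}=W_{ij}^{(i)}\dcup W_{ij}^{(j)}$ with $W_{ij}^{(i)}\subseteq R(v_i)\cap B(v_j)$ (writing $R(v)$ and $B(v)$ for the red and blue components containing $v$). Since the $R(v_k)$'s are pairwise disjoint and the same holds for the $B(v_k)$'s, any $G$-edge between two of these six parts forces the parts to share either their red index or their blue index; in particular, nine of the fifteen possible part-pairs carry no $G$-edge at all.

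\textbf{Closing and main obstacle.} The final step is to contradict this forbidden-edge pattern using the pseudorandom edge distribution of $G$. The delicate issue is quantitative: for $p$ just above $\sqrt{\log n/n}$ the expected number of $G$-edges between two of the sets $W_{ij},W_{ik}$ for a fixed triple is only of order $n^2p^5$, which can tend to zero, so a single-triple edge count will not close the argument. The proof will therefore have either to aggregate the forbidden-edge constraints across the $\sim n^3$ candidate triples via a carefully weighted union bound, or to replace the ``no $G$-edges'' condition by the absence of a slightly denser configuration (for instance, paths of length two through certain $W_{ij}$'s, which do occur a.a.s.) whose presence would already contradict the existence of a bad triple. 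This quantitative closing step is the main obstacle and the heart of the argument.
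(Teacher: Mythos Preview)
Your König reformulation has a gap in the key direction. The implication ``$\phi\to\Pi_2 \Rightarrow$ no bad triple'' is correct, but the converse fails: if $\{C_1,C_2\}$ is a vertex cover of~$H$ with $C_1$ a red component and $C_2$ a blue component, all you get is $V(C_1)\cup V(C_2)=V$, and a \emph{cover} of~$V$ by two monochromatic components does not in general yield a \emph{partition} into two monochromatic trees. A small witness: take $V=\{a_1,a_2,b_1,b_2,i_1,i_2\}$ with red path $a_1 i_1 i_2 a_2$ and blue path $b_1 i_1 b_2 i_2$; then $C_1=\{a_1,i_1,i_2,a_2\}$ and $C_2=\{b_1,i_1,b_2,i_2\}$ cover~$V$, the bipartite graph~$H$ is a double star with vertex cover $\{C_1,C_2\}$ and no matching of size three, yet one checks directly that no partition of~$V$ into two monochromatic trees exists. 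Thus even a complete proof that no $2$-colouring of $G(n,p)$ admits a bad triple would not establish $G(n,p)\to\Pi_2$; the missing ``cover $\Rightarrow$ partition'' step is genuine and is exactly one of the places where the paper has to invoke the random-graph properties (see the argument following~\eqref{eq:CBR} in the non-extremal case).

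Independently, your closing step for ``no bad triple'' is not carried out. You correctly identify the quantitative obstacle---near $p=(\log n/n)^{1/2}$ one has $n^2p^5\to 0$, so the forbidden-edge constraint for a single triple is too weak---but neither of your proposed fixes is made precise, and the suggestion to ``aggregate over the $n^3$ candidate triples'' cannot exploit independence since the colouring is adversarial, not random. The paper proceeds along an entirely different line: it splits colourings into a \emph{non-extremal} case (every $r$ with $d_\R(r)>d(r)/3$ is joined by a monochromatic path to every $b$ with $d_\B(b)>d(b)/3$) and an \emph{extremal} case (some such pair $r,b$ has no monochromatic connection), and in each case constructs the two trees explicitly; the extremal case is the heart of the argument and requires a delicate two-round assignment of vertices to a red tree rooted at~$r$ and a blue tree rooted at~$b$.
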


Combined with the discussion above, Theorem~\ref{thm:main} implies
that $\big(\frac{\log n}{n}\big)^{1/2}$ 
is the threshold for the property $G\ra\Pi_2$. We remark that our proof also yields a semi-sharp
threshold, since with not much additional effort we could replace the assumption $p\gg\big(\frac{\log n}{n}\big)^{1/2}$ by~$p>C\big(\frac{\log n}{n}\big)^{1/2}$ for some suitable constant $C>1$. 
However, for a simpler presentation 
we chose to avoid these calculations and we will only consider the case stated in Theorem~\ref{thm:main}.
In fact, since Theorem~\ref{thm:main} implies that the threshold function for the monotone graph property 
$G\ra\Pi_2$ is not of the form~$n^{-\alpha}$ for some rational $\alpha\in\QQ_{>0}$ it follows from Friedgut's 
criterion~\cite{Fr99}*{Theorem~1.4} that $G\ra\Pi_2$ has indeed a sharp threshold, i.e., there exist
constants $c_1>c_0>0$ and a function $c\colon \NN\to \RR$ with $c_0<c(n)<c_1$ for every $n\in\NN$
such that for every $\eps>0$ we have 
\[
	\lim_{n\to\infty}\PP\big(G(n,p)\ra\Pi_2\big)=\begin{cases}
		0,&\text{if $p<(1-\eps)c(n)\big(\frac{\log n}{n}\big)^{1/2}$}\\
		1,&\text{if $p>(1+\eps)c(n)\big(\frac{\log n}{n}\big)^{1/2}$}\,.
	\end{cases}
\]
In view of the question of Bal and DeBiasio~\cite{BaDe} it remains to
show that $c(n)$ is a constant independent of $n$ and that we have
$c(n)\equiv\sqrt2$.  

Finally, we remark that Bal and DeBiasio~\cite{BaDe} 
also considered multicoloured extensions of this problem and several other
interesting variants. 
Among other they proposed an extension of Theorem~\ref{thm:main} for $r$-colourings of the edges of~$G(n,p)$. 
More precisely, Bal and DeBiasio conjectured that if 
$p=p(n)>(1+\eps)\left(\frac{r\ln n}{n}\right)^{1/r}$ for some $\eps>0$, then a.a.s.\ every $r$-colouring 
of the edges of $G(n,p)$ admits a partition of $V(G)$ into at most $r$ sets each inducing 
a spanning monochromatic component.
It was noted by Ebsen, Mota, and Schnitzer~\cite{EMS} that this conjecture fails to be true 
and that for $r\geq 3$ the threshold for the partition property is at least 
$\big(\frac{\ln n}{n}\big)^{\frac{1}{r+1}}$. 
We present their example in Proposition~\ref{prop:counter} in Section~\ref{sec:counter}.

Roughly speaking, the proof of Theorem~\ref{thm:main}, given in
Section~\ref{sec:proof}, splits into two parts.  We shall define
what we mean by an \emph{extremal} colouring of the edges of a graph,
and we shall consider the extremal and the non-extremal cases
separately.  We shall first consider the somewhat simpler case of
non-extremal colourings in Section~\ref{sec:nonextremal}.  Extremal
colourings will be harder to handle and such colourings will be
analysed in Section~\ref{sec:extremal}.
Before the discussion of these two cases we collect a few observations 
concerning random graphs in Section~\ref{sec:gnp}.
 
\section{Preliminaries}
\label{sec:gnp}
We consider finite simple graphs and follow standard notation and
terminology (see~\cites{Bo98,BM08,Di10} and~\cites{Bo01,JLR00}).  We shall
make use of the following simple lemma on random graphs.

\begin{lemma}\label{lem:gnp}
  If~$p=p(n)\gg((\log n)/n)^{1/2}$, then  for every $\eps>0$ a.a.s. $G\in G(n,p)$ satisfies
  the following properties.
	\begin{enumerate}[label=\rmlabel]
	\item\label{it:gnp:1} Every vertex $v\in V(G)$ has degree $d_G(v)=(1\pm \eps)pn$ and
          every pair of distinct vertices $u$, $w\in V(G)$ 
          has $|N_G(u)\cap N_G(w)|=(1\pm\eps)p^2n$ joint neighbours.
	\item\label{it:gnp:2} For every vertex $v\in V(G)$ 
		and all disjoint subsets $U\subseteq V$
		and~$W\subseteq N_G(v)$ with $|U|\geq 100/p$ and $|W|\geq pn/100$ 
		the number~$e_G(U,W)$ of edges in the induced bipartite graph $G[U,W]$ satisfies $e_G(U,W)> p|U||W|/2$.
	\item\label{it:gnp:3} For every vertex $v\in V(G)$ and $J\subseteq N_G(v)$ with $|J|\geq pn/100$, we have that 
		all but at most $100/p$ vertices $x\in V(G)\setminus J$ satisfy $|N_G(x)\cap J|>p^2n/200$.
	\item\label{it:gnp:4} For every vertex $y\in V(G)$ and $A\dcup B=U\subseteq N_G(y)$ with $|U|\geq |N_G(y)|-p^2n/100$
		and $|A|$, $|B|\geq p^2n/2$, the induced bipartite graph $G[A,B]$ contains at least $p^2n/100$ 
		vertices of degree at least $p^2n/100$.
	\item\label{it:gnp:stree} Every subgraph $H\subseteq G$ with minimum degree 
		$\delta(H)\geq(1/2+\eps)pn$ is connected. 
	\item\label{it:gnp:dgen} Every subgraph~$H\subseteq G$ on at most $100/p$ vertices is $10\log n$-degenerate. 
	\end{enumerate}
\end{lemma}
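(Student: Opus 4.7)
All six assertions are standard concentration statements about $G(n,p)$, and the uniform strategy is to identify the relevant binomial random variable, invoke Chernoff, and close with a union bound. The driving quantitative fact is $p^2 n \gg \log n$, forced by $p \gg ((\log n)/n)^{1/2}$, which makes any binomial with mean of order $p^2 n$ concentrate with failure probability $n^{-\omega(1)}$. This comfortably absorbs union-bound factors of size $n^{O(1)}$ or $2^{O(pn)}$ after a little care.

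Parts~\ref{it:gnp:1}, \ref{it:gnp:3} and~\ref{it:gnp:4} are direct Chernoff-plus-union-bound arguments. In~\ref{it:gnp:1}, $d_G(v)$ and $|N_G(u)\cap N_G(w)|$ are binomials with means $(1-o(1))pn$ and $(1-o(1))p^2n$, and a union bound over vertices and pairs closes it. In~\ref{it:gnp:3} one conditions on $N_G(v)$, observes that the events $|N(x)\cap J|\le p^2n/200$ are independent across $x\in V\setminus J$ (disjoint edges) and each has probability $\exp(-\Omega(p^2n))$, and shows the number of such $x$ exceeds $100/p$ only with a probability too small to survive the $n\cdot 2^{(1+\eps)pn}$ choices of $(v,J)$. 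Part~\ref{it:gnp:4} is analogous after noting that the hypothesis $|U|\ge|N_G(y)|-p^2n/100$ together with~\ref{it:gnp:1} forces $\max(|A|,|B|)\ge(1-o(1))pn/2$, so that for (say) each $v\in B$ the count $|N(v)\cap A|$ is a binomial of mean at least $p^2n/3$ and exceeds $p^2n/100$ except with super-polynomially small probability; very few $v\in B$ are therefore bad, certainly fewer than $|B|-p^2n/100$.

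Part~\ref{it:gnp:2} needs one small extra observation: the clause ``$W\subseteq N_G(v)$'' is an event, not a deterministic constraint, but since for $v\notin U\cup W$ it concerns edges disjoint from those governing $e_G(U,W)$, the two events are independent. One thus bounds the joint bad probability by $p^{|W|}\exp(-\Omega(p|U||W|))$, and then sums over choices using $\binom{n}{|W|}p^{|W|}\le(100e)^{|W|}$ (by $|W|\ge pn/100$). Part~\ref{it:gnp:dgen} is simpler still: $\EE\,e_G(S)\le 50|S|$ when $|S|\le 100/p$, so a Chernoff tail yields $\PP(e_G(S)>5|S|\log n)\le n^{-5|S|}$ for large $n$, which handily dominates $\binom{n}{|S|}\le n^{|S|}$. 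Once every such $S$ satisfies $e_G(S)\le 5|S|\log n$, repeatedly peeling off the vertex of minimum degree gives $10\log n$-degeneracy of any subgraph on at most $100/p$ vertices.

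The main obstacle I expect is~\ref{it:gnp:stree}, since the conclusion is structural rather than purely probabilistic. Suppose for contradiction that $H$ is disconnected, and split $V(H)=C\dcup D$ along components, so $|C|,|D|\ge\delta(H)+1\ge(1/2+\eps)pn$. For each $u\in C$ one has $N_H(u)\subseteq C$, whence $|N_G(u)\cap C|\ge(1/2+\eps)pn$; combined with $|N_G(u)|\le(1+o(1))pn$ from~\ref{it:gnp:1} this gives $|N_G(u)\cap D|\le(1/2-\eps/2)pn$. Summing over $u\in C$ yields $e_G(C,D)\le|C|\cdot(1/2-\eps/2)pn$. A bipartite edge-concentration estimate (Chernoff on $\mathrm{Bin}(|C||D|,p)$, valid because $|C|,|D|\gg\log n/p$) produces the matching lower bound $e_G(C,D)\ge(1-o(1))\,p|C||D|$, forcing $|D|<n/2$ and symmetrically $|C|<n/2$, contradicting $|C|+|D|=n$. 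The one delicate point is to ensure the edge-concentration bound holds uniformly over all partitions, which is yet one more Chernoff-plus-union bound in the same spirit as~\ref{it:gnp:2}.
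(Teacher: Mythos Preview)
Your treatments of~\ref{it:gnp:1},~\ref{it:gnp:2}, and~\ref{it:gnp:dgen} are essentially those of the paper. The genuine gap is in~\ref{it:gnp:3},~\ref{it:gnp:4}, and~\ref{it:gnp:stree}, where the union bounds you sketch do not close. For~\ref{it:gnp:3}, with $|J|=pn/100$ the per-vertex failure probability is only $\exp(-p^2n/800)$, and after aggregating over a bad set of size~$100/p$ you get about~$\exp(-pn/8)$; since $\log 2>1/8$, this loses against your $2^{(1+\eps)pn}$ choices of~$J$ obtained by conditioning on~$N_G(v)$. For~\ref{it:gnp:4}, when the smaller class has size about~$p^2n/2$, the probability that fewer than $p^2n/100$ of its vertices are ``good'' is only $\exp(-\Theta(p^4n^2))$; beating the $\exp(\Theta(pn))$ cost of choosing $A$ and $B$ inside~$N_G(y)$ would require $p^3n\to\infty$, which $p\gg(\log n/n)^{1/2}$ does not guarantee. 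For~\ref{it:gnp:stree}, the uniform estimate $e_G(C,D)\ge(1-o(1))p|C||D|$ likewise needs $p^3n\to\infty$ once $|C|,|D|=\Theta(pn)$, and in any case $|C|+|D|=|V(H)|$, not~$n$.

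The paper sidesteps all three difficulties by routing through statements whose union bounds \emph{do} close. Property~\ref{it:gnp:3} is obtained as a \emph{deterministic} consequence of~\ref{it:gnp:2}: if the set~$U$ of bad vertices had $|U|>100/p$, then applying~\ref{it:gnp:2} with $W=J$ gives $e_G(U,J)>p|U||J|/2\ge |U|p^2n/200$, contradicting the definition of~$U$. For~\ref{it:gnp:4} the paper first proves $e_G(A,N_G(y)\setminus A)>\tfrac{4}{25}p^2n|A|$ for all relevant~$A$, splitting into the case $|A|\ge100/p$ (handled by~\ref{it:gnp:2}) and the case $|A|<100/p$ (handled by the codegree half of~\ref{it:gnp:1} together with a small-set bound $2e_G(A)\le p^2n|A|/4$, whose union bound over sets of size $\le100/p$ is cheap); the degree conclusion then follows by double counting against the codegree upper bound in~\ref{it:gnp:1}. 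For~\ref{it:gnp:stree} the paper proves the \emph{additive} estimate $2e_G(U)<p|U|^2+\eps pn|U|$ for every $U\subseteq V$; the slack $\eps pn|U|$ makes the Chernoff exponent of order $pn^2$ for large~$U$ and $pn|U|$ for small~$U$, so the $2^n$ union bound closes. Applying it to a component~$U$ of~$H$, the minimum-degree hypothesis gives $2e_G(U)\ge|U|(1/2+\eps)pn$, whence $|U|>n/2$, and two disjoint sets each of size $>n/2$ is the contradiction.
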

\begin{proof} Properties~\ref{it:gnp:1}--\ref{it:gnp:dgen} in
  Lemma~\ref{lem:gnp} follow from the concentration of the binomial
  distribution.  In fact, property~\ref{it:gnp:1} is a direct
  consequence of Chernoff's inequality.  
  
  Property~\ref{it:gnp:2} also
  follows from  that inequality by the following argument. For disjoint 
	subsets $U$, $W\subseteq V$  Chernoff's inequality (see, e.g.,~\cite{JLR00}*{Theorem~2.1}) yields
	\[
		\PP\big(e_G(U,W)\leq \tfrac{1}{2}p|U||W|\big) \leq \exp(-p|U||W|/8)\,.
	\]
	Summing over all possible choices of $v\in V$ and all subsets
        $U\subseteq V$ and $W\subseteq N_G(v)$ considered in the
        property, we arrive at
	\begin{align*}
		\PP\big(\,\text{property~\ref{it:gnp:2} fails}\,\big)
		&\leq
		n\sum_{u\geq 100/p}\sum_{w\geq pn/100}\binom{n}{u}\binom{n}{w}p^w\exp(-puw/8)\\
		&\leq
		n\sum_{u\geq 100/p}\sum_{w\geq pn/100}\exp(u\ln n)\left(\frac{\textrm{e} n p}{w}\right)^w\exp(-puw/8)\\
		&\leq 
		n\sum_{u\geq 100/p}\sum_{w\geq pn/100}\exp(u\ln n+6w-puw/8)\,.
	\end{align*}
	Since $puw/16-6w\geq w/4$
        for $u\geq 100/p$ and, since
        $puw/16\geq u p^2n/1600 \gg u\ln n$ for~$w\geq pn/100$
        and~$p\gg((\log n)/n)^{1/2}$, it follows that
	\[
          \PP\big(\,\text{property~\ref{it:gnp:2} fails}\,\big)
          \leq n \sum_{100/p\leq u\leq n}\sum_{w\geq pn/100}\exp(-w/4)=o(1)\,,
	\]
	which concludes the proof of Lemma~\ref{lem:gnp}~\ref{it:gnp:2}.
		
	Property~\ref{it:gnp:3} follows from~\ref{it:gnp:2}. Given a vertex $v$ and a subset $J\subseteq N_G(v)$ of size 
	at least~$pn/100$ we consider the set 
	\[
		U=\big\{x\in V(G)\setminus J\colon |N_G(x)\cap J|\leq p^2n/200\big\}\,.
	\]
	Assuming for a contradiction that $|U|>100/p$ we infer from~\ref{it:gnp:2} that 
	\[
		e_G(U,J)>p|U||J|/2\geq p|U|\cdot pn/200=p^2n|U|/200\,,
	\]
	which contradicts the definition of the set $U$. Consequently, $|U|\leq 100/p$ and property~\ref{it:gnp:3}
	is established.

	The proof of property~\ref{it:gnp:4} makes use of the fact
        that a.a.s.\ for every $y\in V$ and every subset~$A\subseteq N_G(y)$
        with $p^2n/2\leq|A|\leq|N_G(y)\setminus A|$ we have
	\begin{equation}\label{eq:smA2}
		e_G\big(A,N_G(y)\setminus A\big) 
		> 
		\frac{4}{25}p^2 n|A|\,.
	\end{equation}
	In fact, property~\ref{it:gnp:4} follows from~\eqref{eq:smA2} and we prove this implication first. 
        Let a vertex~$y$ and sets~$A$, $B$ and~$U$ be as in the statement of~\ref{it:gnp:4}.
        Without loss of generality, we may suppose~$|A|\leq|B|\leq|N_G(y)\setminus A|$,
        and hence we can apply~\eqref{eq:smA2}.
	Removing all vertices from $A$ that have less than $p^2n/50$ neighbours in $N_G(y)\setminus A$
	and using the bound $|N_G(y)\cap N_G(a)|\leq 2p^2n$ for all $a\in A$, which is given by~\ref{it:gnp:1}, 
	we deduce from~\eqref{eq:smA2} that at least 
	\[
		\frac{4p^2 n|A|/25-|A|p^2n/50}{2p^2n}=\frac{7|A|}{100}>\frac{p^2n}{100}
	\]
	vertices in~$A$ have at least $p^2n/50$ neighbours in $N_G(y)\setminus A$.
	Since $B=(N_G(y)\setminus A)\setminus B'$ for some $|B'|\leq
        p^2n/100$, property~\ref{it:gnp:4} follows and it is left to verify~\eqref{eq:smA2}.
	
	For the proof of~\eqref{eq:smA2} we may assume that $|A|\leq |N_G(y)\setminus A|$ 
	and we consider two cases depending on the size of~$A$. If $|A|\geq 100/p$ 
	inequality~\eqref{eq:smA2} is a consequence of
        property~\ref{it:gnp:2} applied with~$v=y$ and the disjoint
        sets $A$ and $N_G(y)\setminus A$ combined with the first part
        of~\ref{it:gnp:1}, which 
	leads to
	\[
		e_G\big(A,N_G(y)\setminus A\big) 
		\overset{\text{\ref{it:gnp:2}}}{\geq} 
		\frac{1}{2}p\big|A\big|\big|N_G(y)\setminus A\big|
		\overset{\text{\ref{it:gnp:1}}}{\geq}
		\frac{1}{2}p\big|A\big|\cdot\frac{1}{3}pn
		>
		\frac{4}{25}p^2 n|A|\,.
	\]
	For the case $|A|\leq 100/p$ we have $p^2n|A| \gg p|A|^2$.
        Hence, we may use the concentration inequality
        $\PP(X>t)\leq \exp(-t)$ for binomially distributed random
        variables $X$ satisfying~$\EE[X]\leq t/7$ (see,
        e.g.,~\cite{JLR00}*{Corollary~2.4}) to derive that, for every
        fixed set~$A$, we have
	\begin{equation*}
		\PP\big(2e_G(A) > p^2n|A|/4\big)
		\leq
		\exp(-p^2n|A|/4)\,.
	\end{equation*}
	Summing over all sets $A$ of size at most $100/p$ yields 
        \begin{multline}
          \label{eq:smA}
            \qquad\PP\big(\,\exists A\subset V\text{ with }|A|\leq100/p
            \text{ such that }2e_G(A) > p^2n|A|/4\big)\\
            \leq
            \sum_{a=p^2n/4}^{100/p}n^a\exp(-p^2n a/4)=o(1)\,,\qquad
        \end{multline}
	where the last inequality follows from our assumption on~$p$. We infer~\eqref{eq:smA2} from~\eqref{eq:smA}. 
	Given $y\in V(G)$ and $A\subseteq N_G(y)$ with $p^2n/2\leq |A|\leq 100/p$
	we appeal to the second assertion of property~\ref{it:gnp:1} with $\eps=1/2$
	for all pairs of the form $y$, $a$ with $a\in A$. Summing $|N_G(y)\cap N_G(a)|$ over all $a\in A$ 
	yields
 	\begin{equation*}
 		e_G\big(A,N_G(y)\setminus A\big) 
 		> 
 		\frac{1}{2}p^2 n|A|-2e_G(A)
		\overset{\eqref{eq:smA}}{>}
 		\frac{1}{6}p^2 n|A|
 	\end{equation*}
	and~\eqref{eq:smA2} follows. This concludes the proof of~property~\ref{it:gnp:4}.
		
	For property~\ref{it:gnp:stree} we observe that for
        $p\gg(\log n)/n$ and every fixed $\delta>0$, again
        Chernoff's inequality implies that a.a.s., for every subset
        $U\subseteq V$, we have
	\begin{equation}\label{eq:connect}
		2e_G(U) < p|U|^2+\delta pn|U|\,.
	\end{equation}
        To prove~\eqref{eq:connect}, one can analyse the cases in
        which~$\delta n/|U|\leq3/2$, $3/2<\delta n/|U|<7$
        and~$\delta n/|U|\geq7$ separately.  For the first two cases,
        one can use one of the standard forms of Chernoff's
        inequality, as given in, e.g.,~\cite{JLR00}*{Corollary~2.3}.
        For the third case, one can again
        use~\cite{JLR00}*{Corollary~2.4}.  

        Next we consider an arbitrary component $C$ of the subgraph
        $H\subseteq G$ and let $U=V(C)$.  Combining~\eqref{eq:connect}
        for $\delta=\eps$ with the minimum degree assumption tells us
        that 
	\[
		|U|\cdot (1/2+\eps)pn \leq 2e_G(U) < p |U|^2 + \eps pn |U|\,,
	\]
	which implies $|U|>n/2$. Consequently, every component of $H$ spans more than $n/2$ vertices, which implies
	that $H$ is connected.
	
	For the proof of~\ref{it:gnp:dgen} it suffices to show that every subset $U\subseteq V$ of size at most $100/p$
	contains a vertex of degree at most $10\log n$. However, this follows from the observation that for every such set $U$
	we have 
	\[
		e_G(U)\leq |U|\cdot 5\log n\,,
	\]  
	which again can be deduced from the concentration inequality
        given in~\cite{JLR00}*{Corollary~2.4}.
\end{proof}

\section{Proof of the main result}
\label{sec:proof}
We introduce some further notation and classify the two-colourings into two classes 
(see Definition~\ref{def:extremal} below).
For a colouring $\phi \colon E\to\{\red,\blue\}$ of the edges of a graph~$G=(V,E)$ we write 
$\phi\ra\Pi_2$ to indicate that there exist two monochromatic trees $T_1$, $T_2\subseteq G$ such that 
\[
	V(T_1)\dcup V(T_2)=V\,.
\] 
In particular, $G\ra\Pi_2$ if $\phi\ra\Pi_2$ holds for all $2$-colourings $\phi$ of $E$. 
We denote the two edge disjoint spanning monochromatic subgraphs induced by 
$\phi$ by $G^\phi_\R$ and $G^\phi_\B$, i.e.,
\[
	G^\phi_\R=\big(V,\phi^{-1}(\red)\big)
	\qand
	G^\phi_\B=\big(V,\phi^{-1}(\blue)\big)\,.
\]
For a vertex $v\in V$ we consider its \emph{$\red$-} and \emph{$\blue$-neighbourhood}
\[
	N^\phi_\R(v)=\{u\in N(v)\colon \phi(\{v,u\})=\red\}
	\qand
	N^\phi_\B(v)=\{u\in N(v)\colon \phi(\{v,u\})=\blue\}
\]
and the corresponding degrees $d^\phi_\R(v)=|N^\phi_\R(v)|$ and $d^\phi_\B(v)=|N^\phi_\B(v)|$. We roughly 
classify the vertices depending on these degrees by defining the following sets
\begin{equation}\label{eq:BR}
	R^\phi=\big\{v\in V\colon d^\phi_\R(v)>\tfrac{1}{3}d(v)\big\}
	\qand
	B^\phi=\big\{v\in V\colon d^\phi_\B(v)>\tfrac{1}{3}d(v)\big\}\,.
\end{equation}
These sets might not be disjoint, but every vertex is a member of at least one of them
and vertices $v$ in the symmetric difference of these sets have at least $2d(v)/3$ 
neighbours in one colour.
In the proof of Theorem~\ref{thm:main} we consider two cases depending, whether there is 
a monochromatic path between some vertex in~$R^\phi$ and a different vertex in~$B^\phi$.
\begin{definition}\label{def:extremal}
	Let $G=(V,E)$ be a graph and $\phi\colon E\to\{\red,\blue\}$.
	We say $\phi$ is \emph{extremal} if there is a pair of distinct vertices 
	$r\in R^\phi$ and $b\in B^\phi$ for which no monochromatic $r$-$b$-path exists.
	If no such pair of vertices exists, then we say $\phi$ is \emph{non-extremal}. 
\end{definition}
For the proof of Theorem~\ref{thm:main} we consider non-extremal and
extremal colourings $\phi$ separately.  Before we proceed, let us
remark that the property $G\ra\Pi_2$ is an increasing property, that
is, if~$G$ is a spanning subgraph of~$G'$ and $G\ra\Pi_2$ holds,
then~$G'\ra\Pi_2$ also holds.  This implies that it suffices to prove
Theorem~\ref{thm:main} under the additional hypothesis that~$p=o(1)$.

\subsection{Non-extremal colourings}
\label{sec:nonextremal}

The following proposition addresses the case when $\phi$ is non-extremal. 

\begin{proposition}[Non-extremal case]
  \label{prop:nex}
  If $p=p(n)\gg((\log n)/n)^{1/2}$ and~$p=o(1)$, then
  a.a.s.~$G\in G(n,p)$ satisfies $\phi\to\Pi_2$ for every
  non-extremal colouring $\phi\colon E(G)\to\{\red,\blue\}$.
\end{proposition}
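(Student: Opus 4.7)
My plan is to split the argument into three cases, based on how $R^\phi$ and $B^\phi$ interact. Throughout, let $C_\R(v)$ and $C_\B(v)$ denote the vertex sets of the red and blue components of $v$ in $G^\phi_\R$ and $G^\phi_\B$, respectively.

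\emph{Easy cases.} If one of $R^\phi,B^\phi$ is empty---say $B^\phi=\emptyset$---then every $v$ satisfies $d^\phi_\R(v)\geq 2d(v)/3>(1/2+\eps)pn$ by Lemma~\ref{lem:gnp}\ref{it:gnp:1}, so Lemma~\ref{lem:gnp}\ref{it:gnp:stree} makes $G^\phi_\R$ connected and a red spanning tree suffices. If instead $R^\phi\cap B^\phi\ne\emptyset$, I take any $x_0$ in the intersection, set $T_\R:=C_\R(x_0)$ and $T_\B:=C_\B(x_0)$, and observe that every other $v$ lies in $R^\phi$ or $B^\phi$, so non-extremality applied to $(v,x_0)$---with $x_0$ in the complementary role---produces a monochromatic $v$-$x_0$ path, placing $v$ in $T_\R\cup T_\B$. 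Spanning trees of the two components partition $V$.

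\emph{The main case.} The remaining case is $R^\phi\cap B^\phi=\emptyset$ with both non-empty, so every vertex has at least $2d(v)/3$ of its edges in a single colour. I would look for a \emph{strong anchor pair} $(r_0,b_0)\in R^\phi\times B^\phi$ for which $b_0\in C_\R(r_0)$ \emph{and} $r_0\in C_\B(b_0)$. Given such a pair, $T_\R:=C_\R(r_0)$ and $T_\B:=C_\B(b_0)$ again cover $V$: for any $v\in R^\phi$, non-extremality applied to $(v,b_0)$ gives $v\in C_\R(b_0)\cup C_\B(b_0)=T_\R\cup T_\B$, and symmetrically for $v\in B^\phi$ using $r_0$.

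\emph{Main obstacle.} The technical heart is to guarantee a strong anchor pair. Assume none exists, so for every cross pair exactly one colour connects it. The ``pure'' subcases are easy: if every cross pair is red-connected, then fixing any $r_0\in R^\phi$ places all of $R^\phi\cup B^\phi=V$ in $C_\R(r_0)$, making $G^\phi_\R$ connected (and symmetrically for blue). In the mixed subcase, picking a red-only pair $(r_0,b_0)$ and a blue-only pair $(r_1,b_1)$ and enumerating the possible types of the cross pairs $(r_0,b_1)$ and $(r_1,b_0)$ leaves only a rigid ``four-component'' configuration, in which $R^\phi$ splits into two subsets $R^\phi_{(1)},R^\phi_{(2)}$ whose vertices cannot be joined by an edge of $G$: any such edge, red or blue, would merge two monochromatic components that the configuration forces to be distinct. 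When both parts have size at least $pn/100$, Lemma~\ref{lem:gnp}\ref{it:gnp:2} furnishes such edges, a contradiction; otherwise the smaller part has fewer than $pn/100$ vertices and is absorbed into the dominant monochromatic component via the bridging properties Lemma~\ref{lem:gnp}\ref{it:gnp:3}--\ref{it:gnp:dgen}. The delicate absorption step---showing that the few ``wrong-side'' vertices can be relinked without breaking the two-tree structure---is where I expect the bulk of the random-graph calculation.
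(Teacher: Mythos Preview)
Your argument repeatedly establishes that a red component and a blue component together \emph{cover} $V$, but $\phi\to\Pi_2$ requires a \emph{partition} into two monochromatic trees, and the step from covering to partitioning is missing throughout. In your ``easy'' case $R^\phi\cap B^\phi\ne\emptyset$, you correctly show $V=V(C_\R(x_0))\cup V(C_\B(x_0))$, but the sentence ``Spanning trees of the two components partition $V$'' is false: both components contain $x_0$, and removing the overlap from one of them may disconnect it. The same gap recurs in your main case once a strong anchor pair is found: you only claim that $T_\R$ and $T_\B$ \emph{cover} $V$, and stop there. This is not a formality. The paper also arrives at a red component $C_\R$ and a blue component $C_\B$ with $V(C_\R)\cup V(C_\B)=V$, and the remainder of its proof is precisely the upgrade from cover to partition. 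It analyses the symmetric difference $O_\R=V(C_\R)\setminus V(C_\B)$ and $O_\B=V(C_\B)\setminus V(C_\R)$, observes that maximality forbids any monochromatic path between $O_\R$ and $O_\B$, and then uses non-extremality to conclude that either one of $O_\R,O_\B$ is empty (giving a monochromatic spanning tree), or $O_\R\cup O_\B\subseteq R\setminus B$ (or symmetrically $\subseteq B\setminus R$). In the latter case every $v\in O_\B$ has $N_\R(v)\subseteq O_\B$, so $G_\R[O_\B]$ has minimum degree at least $(2/3-\eps)pn$ and is connected by Lemma~\ref{lem:gnp}\ref{it:gnp:stree}, yielding two vertex-disjoint red trees covering $V$.

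Consequently your ``main obstacle'' is misplaced. The paper obtains the covering property quite directly via Lemma~\ref{lem:neat} (any set of pairwise monochromatically connected vertices lies in one monochromatic component), without splitting on whether $R^\phi\cap B^\phi=\emptyset$ and without hunting for anchor pairs. The genuine difficulty is not finding the cover but converting it into a partition, and your sketch of the mixed subcase --- the unjustified claim that no $G$-edge can run between $R^\phi_{(1)}$ and $R^\phi_{(2)}$, and the unexplained ``absorption step'' --- is both aimed at the wrong target and not carried out.
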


In the proof of Proposition~\ref{prop:nex} we shall make use of the following simple observation, which 
is closely related to the fact  that every $2$-colouring of the edges of the 
complete graph yields a monochromatic spanning tree.
\begin{lemma}
\label{lem:neat}
	Let $G=(V,E)$ be a graph and $\phi\colon E\to\{\red,\blue\}$. If for a subset $U\subseteq V$ all
	pairs of vertices $u$, $u'\in U$ are connected by a monochromatic path, then there exists a monochromatic 
	tree~$T$ with $V(T)\supseteq U$. 
\end{lemma}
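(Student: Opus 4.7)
The plan is to adapt the classical observation that in any 2-colouring of the edges of a complete graph at least one colour class is connected, applying it not to $K_n$ but to the two auxiliary ``connectivity relations'' on $U$ coming from red and blue paths in $G$.

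First I would consider how $U$ meets the connected components of $G^\phi_\R$, writing $U = U_1 \dcup \cdots \dcup U_\ell$ according to which red component each vertex of $U$ sits in. If $\ell = 1$, then all of $U$ already lies in a single red component of $G$; a spanning tree of that component is a monochromatic (red) tree $T$ with $V(T) \supseteq U$, and we are done. Otherwise $\ell \geq 2$, and this is where the hypothesis that every pair of vertices in $U$ is connected by \emph{some} monochromatic path takes over. For any $u \in U_i$ and $u' \in U_j$ with $i \neq j$, the hypothesised monochromatic $u$-$u'$-path cannot be red, since $u$ and $u'$ lie in distinct red components, so it must be blue. Hence any two vertices of $U$ lying in different parts $U_i$ are joined by a blue path in $G$.

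A short transitivity argument then upgrades this to the statement that \emph{all} vertices of $U$ lie in a common blue component of $G^\phi_\B$: for $u, u'' \in U_i$ in the same part, I would pick any $u' \in U_j$ with $j \neq i$, which exists because $\ell \geq 2$, and concatenate the blue $u$-$u'$-path with the blue $u'$-$u''$-path. Taking a spanning tree of the resulting blue component yields the desired monochromatic tree $T \supseteq U$. I do not expect any serious obstacle here; the argument is essentially the observation that in any 2-colouring of $K_{U}$ (induced by the two connectivity relations, with edges present in both colours assigned arbitrarily) one colour class spans a connected graph on $U$, and the only step that must be handled with any care is the use of $\ell \geq 2$ to produce the routing vertex $u'$ in the transitivity step.
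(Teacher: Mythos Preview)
Your argument is correct. You partition $U$ by the red components of $G^\phi_\R$, handle the case $\ell=1$ directly, and for $\ell\geq 2$ route through an auxiliary vertex $u'$ in a different part to establish that all of $U$ lies in a single blue component. The use of $\ell\geq 2$ to guarantee such a $u'$ is exactly the point that needs care, and you noted it.

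The paper argues differently: it takes a monochromatic tree $T$ containing the maximum number of vertices of $U$, assumes it is red, and observes that any $u\in U\setminus V(T)$ must be joined by a \emph{blue} path to every vertex of $U\cap V(T)$ (a red path would enlarge $T$), yielding a blue tree covering $(U\cap V(T))\cup\{u\}$ and contradicting maximality. So the paper uses an extremal argument in place of your case split on $\ell$ plus the routing/transitivity step. Both proofs are short and elementary; the paper's version avoids the auxiliary partition and the transitivity detour, while yours makes explicit which colour the final tree will have (blue whenever $U$ is not already red-connected) and ties the lemma more visibly to the classical fact about $2$-coloured complete graphs that you mention.
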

\begin{proof}
	Let $T$ be a monochromatic tree containing the maximum number of vertices from~$U$.
	We may assume that $T$ is colored $\red$.
	If there is some vertex $u\in U\setminus V(T)$, then it must be connected to every vertex 
	$u'\in U\cap V(T)$ by a $\blue$ $u$-$u'$-path, which results in a monochromatic tree containing 
	at least one more vertex from $U$ than~$T$.
\end{proof}

With this observation at hand we can now establish the proof of the proposition.

\begin{proof}[Proof of Proposition~\ref{prop:nex}]
  Owing to $p\gg\big(\frac{\log n}{n}\big)^{1/2}$ we may and shall
  assume that for $\eps=1/10$ the graph $G=(V,E)\in G(n,p)$ satisfies
  properties~\ref{it:gnp:1}--\ref{it:gnp:dgen} given in
  Lemma~\ref{lem:gnp}. 
        Moreover, let 
	$\phi\colon E\to\{\red,\blue\}$ be a non-extremal colouring, which is fixed throughout 
	the proof. For simpler notation, we suppress the superscript~$\phi$ in terms like 
	$G^{\phi}_\R$, $N_\R^{\phi}(v)$, $d_\R^{\phi}(v)$, $R^\phi$, and their $\blue$ counterparts.
	
	If one of the sets $R$ or $B$, say~$R$, is empty, then it
        follows from property~\ref{it:gnp:1}
	that every vertex in~$G$ satisfies $d_\B(v)\geq (2/3-\eps)pn$. Hence, by 
	property~\ref{it:gnp:stree} 
	there exists a $\blue$ spanning tree of~$G$ and $\phi\ra\Pi_2$.
	
	Since~$\phi$ is non-extremal, between every vertex $r\in R$ and every $b\in B$ there exists 
	a monochromatic $r$-$b$-path. In particular, vertices contained in the intersection $R\cap B$
	are connected to every other vertex by a monochromatic path.
	
	 Below we show that there exist monochromatic components $C_\R\subseteq G_\R$ and
	 $C_\B\subseteq G_\B$ covering $V$, i.e., 
	 \begin{equation}\label{eq:CBR}
		V(C_\B)\cup V(C_\R)=V\,.
	 \end{equation}
	 Consider a  monochromatic component~$C$ containing the most number of vertices. In particular,
	 any pair of vertices in~$C$ can be connected by a monochromatic path. If $C$ would be 
	 completely contained in $R$ or $B$, say without loss of generality in $R$, then we can fix an 
	 arbitrary vertex $v\in B$ and Lemma~\ref{lem:neat}
	 would show that there exists a monochromatic component containing~$C$ and~$v$, which violates the maximal
	 choice of~$C$. Therefore, $C$ intersects each set $R$ and $B$ in at least one vertex, 
	 say $v_r\in R$ and $v_b\in B$ 
	 and without loss of generality we may assume $C$ is coloured~$\red$. 
	 
	 Then for every vertex 
	 $u\in R\setminus V(C)$ the monochromatic $v_b$-$u$-path must be $\blue$ and, hence, all pairs of vertices
	 in $R\setminus V(C)$ are connected by a $\blue$ path. Consequently, all pairs of vertices in
	 \begin{equation}\label{eq:easy1}
	 	\big(V(C)\cap B\big)\cup \big(R\setminus V(C)\big)
	 \end{equation}
	 are connected by monochromatic paths and another application of Lemma~\ref{lem:neat} yields 
	 a monochromatic component $C'$ containing the vertices from~\eqref{eq:easy1}. Similarly, there 
	 exists a monochromatic component $C''$ containing all vertices from
	 \[
	 	\big(V(C)\cap R\big)\cup \big(B\setminus V(C)\big)\,.
	 \]
	In particular, $C'$ and $C''$ cover all vertices of~$G$. If both these components have the same colour
	then we either found two disjoint monochromatic trees covering~$V$ or one such tree, i.e., $\phi\ra\Pi_2$. 
	If $C'$ and $C''$ are of different colours then~\eqref{eq:CBR} follows.
	
	It is left to deduce the proposition from~\eqref{eq:CBR}. Let $C_\R\subseteq G_\R$ and
	 $C_\B\subseteq G_\B$ satisfy~\eqref{eq:CBR}. We may assume that both components are maximal, i.e., 
	 every vertex in the complement of $C_\R$ has only $\blue$ neighbours in $C_\R$ and, analogously, 
	 every vertex in the complement of $C_\B$ has only $\red$ neighbours in $C_\B$. We consider 
	 the symmetric difference of $C_\R$ and $C_\B$ and let
	 \[
	 	O_\R=V(C_\R)\setminus V(C_\B)
		\qand
		O_\B=V(C_\B)\setminus V(C_\R)
	 \]
	 be the two parts of the symmetric difference, where vertices in $O_\R$ are 
	 only contained in $C_\R$ and those from $O_\B$ are only contained in~$C_\B$. 
	 Note that the maximal choice of 
	 $C_\R$ and $C_\B$ implies that there is no edge between $O_\R$ and $O_\B$. In fact, 
	 there is not even a monochromatic 
	 path between $O_\R$ and $O_\B$, since every edge leaving $O_\R$
	 is $\blue$ and every edge entering $O_\B$ is $\red$. Owing to the assumption 
	 that every vertex in~$R$ is connected by a monochromatic path with every vertex 
	 in~$B$ we arrive at one of the following two cases
	 \begin{enumerate}[label=\RMlabel]
	 	\item\label{it:nex:c1} $O_\R=\emptyset$ or $O_\B=\emptyset$,
	 	\item\label{it:nex:c2} $O_\R\cup O_\B\subseteq R\setminus B$ 
			or  $O_\R\cup O_\B\subseteq B\setminus R$.
	 \end{enumerate}
	 To see that one of the cases must occur, let us assume case~\ref{it:nex:c1}
	 does not hold and let $v\in O_\R$ and $u\in O_\B$.
	 As noted above it is not possible that one of the vertices is contained 
	 in~$R$, while the other one is a member of~$B$. Consequently, both of them 
	 must be contained in~$R\setminus B$ or in~$B\setminus R$. Repeating the same 
	 argument for pairs $(v,u')$ with $u'\in O_\B$ and pairs~$(v',u)$ with $v'\in O_\R$
	 yields case~\ref{it:nex:c2}.

	 Next we note that case~\ref{it:nex:c1} asserts that one of the parts of the symmetric difference of 
	 $C_\R$ and $C_\B$ is empty, which combined with~\eqref{eq:CBR} implies the existence 
	 of a monochromatic spanning tree in~$G$.
	 
	 For case~\ref{it:nex:c2} we can assume without loss of generality that 
	 $O_\R\cup O_\B\subseteq R\setminus B$.
	 We infer from the maximality of $C_\R$ that no vertex in~$O_\B$ has a $\red$ 
	 neighbour in $C_\R$, and, therefore,
	 \[
	 	N_\R(v)\subseteq O_\B
	 \] 
	 for every $v\in O_\B$. Since $O_\B\subseteq R\setminus B$ it follows from 
	 property~\ref{it:gnp:1} that $G_\R$ induced on~$O_\B$ 
	 has minimum degree $(2/3-\eps)pn$. Consequently,
         property~\ref{it:gnp:stree}
	 yields a $\red$ spanning tree on $O_\B$ and combined with a $\red$ spanning tree on $C_\R$
	 we found two vertex disjoint red trees covering $G$, which concludes the proof of 
	 Proposition~\ref{prop:nex}.
\end{proof}

\subsection{Extremal colourings}
\label{sec:extremal}
In this section we consider extremal colourings $\phi$ and establish an analogous proposition 
as in the non-extremal case. Together Propositions~\ref{prop:nex} and~\ref{prop:ex} establish 
Theorem~\ref{thm:main}.

\begin{proposition}[Extremal case]
  \label{prop:ex}
  If $p=p(n)\gg\big((\log n)/n\big)^{1/2}$ and~$p=o(1)$, then
  a.a.s.~$G\in G(n,p)$ satisfies $\phi\ra\Pi_2$ for every extremal
  colouring $\phi\colon E(G)\to\{\red,\blue\}$.
\end{proposition}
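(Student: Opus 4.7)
To prove Proposition~\ref{prop:ex} I would exploit the rigid structure forced on $G$ by a colouring $\phi$ that is extremal in the sense of Definition~\ref{def:extremal}. Fix distinct $r \in R^\phi$ and $b \in B^\phi$ admitting no monochromatic $r$-$b$-path, and let $A$ denote the component of $G^\phi_\R$ containing $r$ and $B$ the component of $G^\phi_\B$ containing $b$. Consider the decomposition $V = A_0 \dcup I \dcup B_0 \dcup W$, where $A_0 = A \setminus B$, $I = A \cap B$, $B_0 = B \setminus A$, and $W = V \setminus (A \cup B)$; the hypothesis that no monochromatic $r$-$b$-path exists forces $r \in A_0$ and $b \in B_0$.

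The first step is structural: since $A$ is a \emph{maximal} red-connected set, every edge between $A$ and $V \setminus A$ must be blue, and dually every edge between $B$ and $V \setminus B$ must be red. Combining these rules, no edge can exist between $A_0$ and $B_0$, nor between $I$ and $W$ (either kind would be forced simultaneously red and blue). Applying Lemma~\ref{lem:gnp}~\ref{it:gnp:2} (together with a Chernoff-type union bound on bipartite edge counts) to each of the forbidden pairs, and using $|A|, |B| \geq (1-o(1))\,pn/3$ from Lemma~\ref{lem:gnp}~\ref{it:gnp:1} and the definitions of $R^\phi, B^\phi$, one deduces that in each of $\{A_0, B_0\}$ and $\{I, W\}$ at least one of the two sets has size at most $100/p$. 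Since $r \in A_0$ and $b \in B_0$ are both nonempty while $|A|, |B|$ are of order $pn$, the set $I$ must be large. Without loss of generality $|B_0| \leq 100/p$ (the other case is symmetric upon exchanging the roles of the two colours, and of $r$ and $b$), and a further Chernoff estimate then forces $W = \emptyset$ a.a.s., since any putative $w \notin I$ would have expected $p|I| \gg \log n$ edges into $I$.

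With $W = \emptyset$ and $|B_0| \leq 100/p$, I would set $V_2 = B_0 \cup I'$ and $V_1 = A_0 \cup (I \setminus I')$, where $I' \subseteq I$ is a small bridge set constructed as follows. For each $v \in B_0 \setminus \{b\}$, Lemma~\ref{lem:gnp}~\ref{it:gnp:1} and~\ref{it:gnp:3} together provide an abundance of common neighbours of $v$ and $b$ lying in $I$, so that distinct $i_v \in N(v) \cap N(b) \cap I$ may be chosen. Since $i_v \in A$ while $v, b \in V \setminus A$, both edges $v\,i_v$ and $i_v\,b$ are blue by the first structural observation, and the resulting blue double-star centred at $b$ shows $G^\phi_\B[V_2]$ to be connected; moreover $|I'| \leq |B_0| - 1 \leq 100/p$.

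The principal obstacle is to verify that $V_1 = A \setminus I'$ spans a red-connected subgraph of $G$. My plan is to show that the removal of at most $100/p$ vertices of $I'$ from $A$ cannot disconnect its red subgraph, provided the bridges $i_v$ are chosen with sufficient care; a first attempt is to insist each $i_v$ be a leaf of some fixed red spanning tree of $A$. When this is not directly achievable --- notably when vertices of $I$ have roughly balanced red and blue degrees so that no tree structure dominates --- I would invoke Lemma~\ref{lem:gnp}~\ref{it:gnp:4} at suitable $y \in I$ with the partition $N^\phi_\R(y) \dcup N^\phi_\B(y)$ of $N_G(y)$ to supply many length-two red paths in $A$ that bypass each individual $i_v$, and then apply Lemma~\ref{lem:gnp}~\ref{it:gnp:stree} after restricting to an appropriate high-red-degree subgraph of $V_1$ to obtain red-connectivity. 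I expect this final step to require the bulk of the technical work, as the red-connectivity of $A \setminus I'$ is the delicate piece and calls on several of the random-graph properties of Lemma~\ref{lem:gnp} in concert.
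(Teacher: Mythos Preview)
Your structural decomposition $V=A_0\dcup I\dcup B_0\dcup W$ and the observation that there are no $A_0$--$B_0$ or $I$--$W$ edges are correct and elegant, but the argument breaks down at two points.

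First, the claim that $W=\emptyset$ cannot be justified as stated. The set~$I$ depends on the adversarial colouring~$\phi$, so a first-moment statement like ``each $w$ has $p|I|\gg\log n$ expected edges into~$I$'' is not available; the best you can extract from Lemma~\ref{lem:gnp}~\ref{it:gnp:3} (applied with $J=N_\B(b)\cap I\subseteq N(b)$) is $|W|\le 100/p$. But any vertex $w\in W$ lies outside both the red component of~$r$ and the blue component of~$b$, so by definition there is no red $w$--$r$ path and no blue $w$--$b$ path. Your proposed partition $V_1\dcup V_2$ with $V_1\subseteq A$ and $V_2$ blue-connected to~$b$ therefore cannot accommodate such a~$w$ at all. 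This is not a cosmetic issue: $|W|$ may be of order~$1/p$, which for $p$ near $(\log n/n)^{1/2}$ is far larger than~$p^2n$, the scale of joint neighbourhoods.

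Second, the assertion that one of $|A_0|,|B_0|$ is at most $100/p$ is not established. Lemma~\ref{lem:gnp}~\ref{it:gnp:2} only yields this when, say, $|N_\R(r)\cap A_0|\ge pn/100$; if instead almost all of $N_\R(r)$ and $N_\B(b)$ lie in~$I$, both $A_0$ and $B_0$ can have size well above $100/p$, and then your bridge set $I'$ is no longer small, so the final red-connectivity step (which you already flag as delicate and leave largely unspecified) has no chance.

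The paper takes a completely different route: rather than working with the global components $A$ and~$B$, it builds the two trees vertex by vertex from the roots~$r$ and~$b$. A set~$J$ of \emph{joker vertices} in $N_\B(b)$ with many red edges into $N_\R(r)$ is identified; each remaining vertex is assigned a \emph{preferred colour} indicating a robust connection to one root, with the small residual set~$Y$ (of size $\le 100/p$) ordered by degeneracy and handled inductively. A random bipartition of~$V$ then finalises the assignment, with the jokers absorbing the conflicts. The extremality hypothesis is used only once, to guarantee that every neighbour of~$r$ (resp.~$b$) has the correct preferred colour, which is what makes the inductive step for non-canonically connected vertices in~$Y$ go through.
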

\begin{proof}
	As in the proof of Proposition~\ref{prop:nex} we may and shall
        assume that $G=(V,E)\in G(n,p)$
	satisfies properties~\ref{it:gnp:1}--\ref{it:gnp:dgen} for $\eps=1/100$ given in Lemma~\ref{lem:gnp}. Let 
	$\phi\colon E\to\{\red,\blue\}$ be a fixed extremal colouring and again, 
	for simpler notation, in what follows we suppress the
        superscript~$\phi$ in terms like~$G^{\phi}_\R$,  
	$N_\R^{\phi}(v)$, $d_\R^{\phi}(v)$, $R^\phi$, and their $\blue$ counterparts.
	
	Let $r\in R$ and $b\in B$ be two distinct vertices for which no monochromatic $r$-$b$-path 
	exists. We shall build a $\red$ and a $\blue$ tree with \textit{roots}~$r$ and~$b$. 
        We sometimes refer to~$r$ as the \textit{red root} and to~$b$ as the \textit{blue root}.  The trees will be  
	built in two stages. In the first stage every vertex $v\in V\setminus\{r,b\}$ will be assigned 
	a \emph{preferred colour} $\rho(v)$, which indicates its
        ``preference''. In fact, the preferred colour~$\rho(v)$ will 
	be chosen in such a way that $v$ can be connected in the `right colour' to~$r$ or~$b$ in a 
	robust way, that is, there will be `many' $\rho(v)$-coloured
        paths from~$v$ to the root of colour~$\rho(v)$.  The preferred
        colours will be assigned vertex by vertex and earlier choices
        may influence  
	those chosen later. However, in this process it might turn out that a later vertex~$v$ 
	needs to be connected to the $\blue$ tree through an earlier
        vertex $u$ with $\rho(u)=\red$ (thus~$u$ would in principle
        belong to the red tree that we are building).  To resolve such 
        conflicts, we finalise the choices in a second round after every vertex has chosen 
	its preferred colour and, in fact, here some vertices may get connected to the tree opposite
	to its preferred colour (e.g., because of~$v$ above we may
        decide to override~$u$'s preference ($\rho(u)=\red$) and
        connect~$u$ to the blue tree). 	
	Below we give the details of this approach.
	
	\subsection*{First stage: choosing preferred colours}
	We begin with the neighbours of~$r$ and~$b$ which are connected by an edge of the `right colour' to the 
	respective root.  For those vertices~$v$, we set the preferred colour to the obvious choice:
	\begin{equation}\label{eq:prefN}
		\rho(v)=
		\begin{cases}
			\red, &\text{if $v\in N_\R(r)\setminus N_\B(b)$}\\
			\blue, &\text{if $v\in N_\B(b)\setminus N_\R(r)$}\,.
		\end{cases}
	\end{equation}
	For symmetry reasons we defer the assignment of~$\rho(v)$ to
        the vertices~$v$ in $N_\R(r)\cap N_\B(b)$ for a moment.  Next
        we consider the edges between $N_\R(r)$ and $N_\B(b)$.  Recall
        that we assume that
        properties~\ref{it:gnp:1}--\ref{it:gnp:dgen} in
        Lemma~\ref{lem:gnp} hold for~$G$.  Recall also that we suppose
        that~$p=o(1)$.  Both assertions in property~\ref{it:gnp:1},
        combined with the definition of the sets $R$ and $B$, allow us
        to invoke property~\ref{it:gnp:2} to obtain that
	\[
		e_G\big(N_\R(r)\setminus N_\B(b), N_\B(b)\setminus N_\R(r)\big)
		\geq 
		\frac{p}{2}\,\big|N_\R(r)\setminus N_\B(b)\big|\,\big|N_\B(b)\setminus N_\R(r)\big|\,.
	\]
	At least half of these edges have the same colour and, by
        symmetry, \textit{we may assume that they are red}.
        We continue with the following claim.

        \begin{claim}
          \label{claim:1}
          At least~$pn/100$ vertices~$v\in N_\B(b)\setminus N_\R(r)$ 
          satisfy
          \begin{equation}\label{eq:J0}
            \big|N_\R(v)\cap \big(N_\R(r)\setminus N_\B(b)\big)\big|
            >
            \frac{p^2n}{25}\,.
          \end{equation}
        \end{claim}
        \begin{proof}
          The vertices~$v\in N_\B(b)\setminus N_\R(r)$ with
          \begin{equation}
            \label{eq:1}
            |N_\R(v)\cap(N_\R(r)\setminus
            N_\B(b))|\leq\frac{p}{8}|N_\R(r)\setminus N_\B(b)|
          \end{equation}
          can account for at most
          $(p/8)|N_\R(r)\setminus N_\B(b)||N_\B(b)\setminus
          N_\R(r)|$
          red edges between the sets $N_\R(r)\setminus N_\B(b)$
          and~$N_\B(b)\setminus N_\R(r)$, of which there are at
          least
          \[
          	\frac{1}{4}p\big|N_\R(r)\setminus N_\B(b)\big|\big|N_\B(b)\setminus N_\R(r)\big|\,.
          \]
          Therefore, in view of property~\ref{it:gnp:1}, there must
          be at least
          \begin{equation}\label{eq:Js}
            \frac{\frac{p}{8}\,\big|N_\R(r)\setminus
              N_\B(b)\big|\,\big|N_\B(b)\setminus
              N_\R(r)\big|}{(1+\eps)p^2n} 
            >
            \frac{1}{25}\big|N_\B(b)\setminus N_\R(r)\big|
            >
            \frac{pn}{100}
          \end{equation}
          vertices~$v\in N_\B(b)\setminus N_\R(r)$ with
          \begin{equation}\label{eq:J0b}
            \big|N_\R(v)\cap \big(N_\R(r)\setminus N_\B(b)\big)\big|
            >
            \frac{p}{8}\big|N_\R(r)\setminus N_\B(b)\big|
            >
            \frac{p^2n}{25}\,,
          \end{equation}
          as required. 
        \end{proof}

	The vertices~$v$ satisfying~\eqref{eq:J0} play a special r\^ole in the proof, since they can be  
	used to connect other vertices to both roots, as they are blue neighbours of~$b$ and connect (robustly)
	by $\red$ paths of length two to~$r$. Furthermore, the vertices in $N_\R(r)\cap N_\B(b)$ 
	are even direct neighbours of both roots in the right colour. We will refer to the vertices in
        \begin{equation}
          \label{eq:J_def}
		J
		=
		\big\{v\in N_\B(b)\setminus N_\R(r)\colon \text{$v$ satisfies \eqref{eq:J0}}\big\} 
		\cup 
		\big(N_\R(r)\cap N_\B(b)\big)
        \end{equation}
	as the \emph{joker vertices}.  Note that Claim~\ref{claim:1}
        implies
	\begin{equation}\label{eq:Jsize}
		|J| > \frac{pn}{100}\,.
	\end{equation}
	For the presentation, it will also be simpler to give all joker vertices the 
	same preferred colour and, hence, we set
	\[
		\rho(v)=\blue
	\]
	for all $v\in N_\R(r)\cap N_\B(b)$.  This way we have defined
        $\rho(v)$ for every~$v\in N_\R(r)\cup N_\B(b)$.
	
	Among the vertices not considered so far we turn first to those with 
	a decent number of joker vertices as neighbours. More precisely, we
	set
        \begin{equation}
          \label{eq:X_def}
          X=\Big\{x\in V\setminus \big(N_\R(r)\cup N_\B(b)\cup\{r,b\}\big)\colon
			\big|N(x)\cap J\big|>\frac{p^2n}{200}\Big\}\,.
        \end{equation}
	In particular, every vertex $x\in X$ has more than $p^2n/400$ jokers as neighbours in one colour 
	and this will 
	be its preferred colour, i.e., for every $x\in X$ we set
	\begin{equation}\label{eq:prefX}
		\rho(x)=\begin{cases}
			\red, & \text{if}\ |N_\R(x)\cap J|>\frac{p^2n}{400}\\
			\blue, & \text{if}\ |N_\B(x)\cap J|>\frac{p^2n}{400},
		\end{cases}
	\end{equation}
        for vertices~$x$ satisfying both conditions in~\eqref{eq:prefX},
        we pick the value of~$\rho(x)$ arbitrarily.
	Note that, for every vertex $v$ which has been assigned a
        preferred colour~$\rho(v)$ already,  
	\begin{equation}\label{eq:inv1}
          \text{there exists a $\rho(v)$-coloured  
            path from~$v$ to the root of colour~$\rho(v)$.}
	\end{equation}
	We shall keep this invariant in the assignment of the preferred colours to the remaining vertices.
	
        Before we continue, we make the following remark, which partly
        explains some of the underlying ideas in our approach.

        \begin{remark}\label{rem:1}
          If we have reached every vertex of~$G$ at this point (that
          is, if $V=\{r,b\}\cup N_\R(r)\cup N_\B(b)\cup X$), then we
          can finish the proof as follows. For every vertex in $J$ we
          decide independently with probability $1/2$ whether we
          attach it to the $\red$ tree or to the $\blue$ tree and
          every other vertex will be attached to the tree matching its
          preferred colour. This clearly works for the vertices in
          $N_\R(r)\cup N_\B(b)$. Moreover, since every vertex $x\in X$
          connects to at least $\frac{p^2n}{400}\gg \log n$ neighbours
          in~$J$ in its preferred colour, at least one of those
          neighbours will obtain that colour in the random assignment
          (with high probability) and this would conclude the
          proof. Note that, for this argument to work, it would
          suffice if the joker vertices
          in~$N_\B(b)\setminus N_\R(r)$ had just one red neighbour
          in $N_\R(r)\setminus N_\B(b)$.\hfill$\sqbullet$
        \end{remark}

        Unfortunately, some vertices may have only a few neighbours
        in~$J$,
        and therefore we could have that $V\neq\{r,b\}\cup
        N_\R(r)\cup N_\B(b)\cup X$.  Let
	\[
		Y=V\setminus \big(N_\R(r)\cup N_\B(b)\cup\{r,b\}\cup X\big)\,.
	\]
        We now proceed to define~$\rho(y)$ for every~$y\in Y$.  Since
        $J\subseteq N_\B(b)$ we can apply property~\ref{it:gnp:3} to
        obtain that
	\begin{equation}\label{eq:Ysize}
		m=|Y|\leq \frac{100}{p}\,.
	\end{equation}
	Consequently, we infer from property~\ref{it:gnp:dgen} that we can 
	order the vertices in~$Y$ as~$y_1,\dots,y_m$ in such a way that for every $i\in [m]$ we have 
	\begin{equation}\label{eq:Ny}
		\big|N(y_i)\cap Y_{i+1}\big|\leq 10\log n\quad
		\text{for $Y_{i+1}=\{y_{i+1},\dots,y_m\}$.}
	\end{equation}
	We shall assign the preferred colours to the vertices in~$Y$ in this order. Let 
	$i\in [m]$ and suppose the preferred colours $\rho(y_j)$ for $j\in[i-1]$ were already 
	fixed. We consider two cases depending on the preferred colours appearing in the 
	neighbourhood of~$y_i$. We split~$N(y_i)$ according to the preferred colours of the vertices, i.e., 
	we consider the partition
	\[
		N(y_i)=\big(N(y_i)\cap\rho^{-1}(\red)\big)
		\dcup
		\big(N(y_i)\cap\rho^{-1}(\blue)\big)
		\dcup
		\big(N(y_i)\cap Y_{i+1}\big)\,.		
	\]

	We say $y_i$ is \emph{canonically connected in $\red$ $($resp.\ $\blue$$)$} if~$y_i$ connects in 
	$\red$ (resp.\ $\blue$) to many vertices with preferred colour $\red$ (resp.\ $\blue$), i.e.,
	\begin{equation}\label{eq:prefYs}
		\big|N_\R(y_i)\cap\rho^{-1}(\red)\big|\geq \frac{p^2n}{400}
	\end{equation}
	(resp.\ $|N_\B(y_i)\cap\rho^{-1}(\blue)|\geq p^2n/400$). 
	If $y_i$ fails to be canonically connected in 
	either colour, then we say it is \emph{non-canonically connected}.
	
	We set $\rho(y_i)=\red$ (resp.\ $\rho(y_i)=\blue$) if $y_i$ is canonically connected in red (resp.\ blue).
	Clearly, by induction, 
	with this choice of $\rho(y_i)$ we also ensure property~\eqref{eq:inv1}.
	
	It is left to consider vertices $y_i$ that are non-canonically
        connected.  Since
	\begin{multline*}
		\big(N_\B(y_i)\cap\rho^{-1}(\red)\big)
		\dcup
		\big(N_\R(y_i)\cap\rho^{-1}(\blue)\big)\\
		=
		N(y_i)\setminus
			\Big(\big(N_\R(y_i)\cap\rho^{-1}(\red)\big)
			\dcup
			\big(N_\B(y_i)\cap\rho^{-1}(\blue)\big)
			\dcup
			\big(N(y_i)\cap Y_{i+1}\big)
			\Big)\,,
	\end{multline*}
	in this case
	we have 
	\begin{align}
		\big|\big(N_\B(y_i)\cap\rho^{-1}(\red)\big)
		\dcup
		\big(N_\R(y_i)\cap\rho^{-1}(\blue)\big)\big|
		&>
		\big|N(y_i)\big|-\frac{p^2n}{200}-10\log n\nonumber\\
		&>
		\big|N(y_i)\big|-\frac{p^2n}{100}\,.\label{eq:comp1}
	\end{align}
	In other words, the preferred colour $\rho(v)$ of almost all 
	neighbours $v$ of $y_i$ mismatches the colour of the edge~$\{y_i,v\}$, i.e., 
	$\phi(\{y_i,v\})\neq\rho(v)$. 
	Next we show that both mismatching sets are large enough 
	to ensure quite a few edges crossing these sets. More precisely,  we will show 
	that the induced bipartite subgraph
\begin{equation}\label{eq:misE}
  \begin{split}
		&G_{\textrm{mis}}(y_i)=G\big[N_\B(y_i)\cap\rho^{-1}(\red),N_\R(y_i)\cap\rho^{-1}(\blue)\big]\\
		&\text{contains $p^2n/100$ vertices of degree at least  $p^2n/100$.}
  \end{split}
\end{equation}
	
	Note that the existence of any edge $\{u,v\}$ in the graph $G_{\textrm{mis}}(y_i)$
	allows us to connect~$y_i$ in colour~$\phi(\{u,v\})$ to
	the root of colour~$\phi(\{u,v\})$.  More precisely, if $u\in N_\B(y_i)\cap\rho^{-1}(\red)$ and 
	$v\in N_\R(y_i)\cap\rho^{-1}(\blue)$ and $\phi(\{u,v\})=\red$, then there exists a $\red$ 
	$y_i$-$r$-path using the $\red$ $u$-$r$-path guaranteed by~\eqref{eq:inv1} and the 
	$\red$ edges $\{y_i,v\}$ and $\{v,u\}$. This then would allow us to assign preferred 
	colour $\red$ to $y_i$. However, for a path as above we use $v$ for a $\red$ path, even though 
	$v$'s preferred colour is $\blue$ ($\rho(v)=\blue$). Such ``conflicts'' will be resolved in the second 
	stage and for that we need a more ``robust'' way to connect $y_i$ to the root of its 
	preferred colour. We prepare for that by proving~\eqref{eq:misE}.
        We also remark that the proof of~\eqref{eq:misE} 
	is the only place in the proof where it will be essential 
	that there is no monochromatic path between~$r$ and~$b$ and that $p\gg\big(\frac{\log n}{n}\big)^{1/2}$.
 
        \begin{proof}[Proof of~\eqref{eq:misE}]
          As it turns out, it suffices to establish a suitable lower
          bound on the cardinality of the two types of mismatching
          neighbourhoods of~$y_i$; namely, it is enough to prove that
	\begin{equation}\label{eq:lboundmiss}
		\big|N_\B(y_i)\cap\rho^{-1}(\red)\big|\geq \frac{1}{2}p^2n
		\qqand 
		\big|N_\R(y_i)\cap\rho^{-1}(\blue)\big|\geq\frac{1}{2}p^2n\,.
	\end{equation}
        Indeed, property~\ref{it:gnp:4} tells us
        that~\eqref{eq:lboundmiss} combined with~\eqref{eq:comp1}
        yields~\eqref{eq:misE}. 
	
	For the proof of~\eqref{eq:lboundmiss} we first observe that
	\begin{align}
		N_\B(y_i)\cap\rho^{-1}(\red)
		&=
		\big(N(y_i)\cap \rho^{-1}(\red)\big)\setminus \big(N_\R(y_i)\cap\rho^{-1}(\red)\big) \nonumber\\
		&\supseteq
		\big(N(y_i)\cap N(r)\cap \rho^{-1}(\red)\big)\setminus \big(N_\R(y_i)\cap\rho^{-1}(\red)\big)\,.\label{eq:Nyir}
	\end{align}
	We shall next consider the joint neighbourhood of~$y_i$
        and~$r$.  Note that no $v\in N_\B(r)$ can have preferred
        colour $\blue$.  In fact, if $\rho(v)=\blue$, then there
        exists a $\blue$ $v$-$b$-path in~$G$ (see~\eqref{eq:inv1}) and
        combined with $\phi(\{r,v\})=\blue$ this leads to a $\blue$
        path between~$r$ and $b$, which was excluded by the choice
        of~$r$ and~$b$. Moreover, every $\red$ neighbour~$v$ of~$r$
        outside $N_\R(r)\cap N_\B(b)\subseteq J$ (i.e.,
        every~$v\in N_\R(r)\setminus(N_\R(r)\cap N_\B(b))$) was
        assigned preferred colour $\red$
        in~\eqref{eq:prefN}. Therefore,
	\[
		N(r)\subseteq \rho^{-1}(\red)\cup J\cup Y_i\,,
	\]
        whence we deduce that
	\[
		N_\B(y_i)\cap\rho^{-1}(\red)
		\overset{\eqref{eq:Nyir}}{\supseteq}
		\big(N(y_i)\cap N(r)\big) \setminus \big(Y_{i+1}\cup J \cup (N_\R(y_i)\cap\rho^{-1}(\red))\big)\,.
	\]
	From~\eqref{eq:Ny}, the fact that~$y_i\not\in X$
        (see~\eqref{eq:X_def}), and the fact that~$y_i$ is
        not canonically connected in $\red$ (see~\eqref{eq:prefYs}), we infer that
	\begin{equation*}
		\big|N_\B(y_i)\cap\rho^{-1}(\red)\big|
		\geq
		\big|N(y_i)\cap N(r)\big|
			 -10\log n - \frac{p^2n}{200}-\frac{p^2n}{400}\,.
	\end{equation*}
	Therefore, the first inequality in~\eqref{eq:lboundmiss}
        follows from property~\ref{it:gnp:1} and $p^2n\gg \log n$. The
        second inequality in~\eqref{eq:lboundmiss} follows by the
        symmetric argument with colours exchanged.  As observed above,
        this establishes~\eqref{eq:misE} as well.
	\end{proof}

	Finally, we define the preferred colour of $y_i$ by
	\begin{equation}\label{eq:prefYns}
		\rho(y_i)=\begin{cases}
			\red, &\text{if $E(G_{\textrm{mis}}(y_i))\cap\phi^{-1}(\red)$ induces $\frac{p^2n}{200}$ 
				vertices of degree $\geq\frac{p^2n}{200}$}\\
		\blue, &\text{otherwise.}
		\end{cases}
	\end{equation}
	Recalling the discussion following~\eqref{eq:misE} we note that 
	also in this case we ensure property~\eqref{eq:inv1} for the vertex $y_i$. 
	Note that in view of property~\ref{it:gnp:4}, if $\rho(y_i)$ is blue, then $E(G_{\textrm{mis}}(y_i))\cap\phi^{-1}(\blue)$ induces $\frac{p^2n}{200}$ 
	vertices of degree $\geq\frac{p^2n}{200}$.
	
	This concludes the discussion of the first stage and we assigned preferred colours~$\rho(v)$
	to every vertex $v\in V\setminus \{r, b\}$. For that we considered 
	the vertices in $(N_\R(r)\cup N_\B(b))\setminus J$, in the joker set $J$, in the set $X$ connected 
	``robustly'' to the joker set, and in the remaining set $Y$ differently. Moreover, the 
	vertices in~$Y$ were treated differently depending on whether they are canonically connected or not.
	
	For later reference we note the following properties in addition to~\eqref{eq:inv1} for every vertex from the set
	$\big(J\setminus (N_\R(r)\cap N_\B(b))\big)\dcup X\dcup Y$.
	\begin{enumerate}[label=\alabel]
		\item\label{it:a} If $v\in J\setminus (N_\R(r)\cap
                  N_\B(b))$, then it follows from the definition~\eqref{eq:J_def}
                  of~$J$ that 
		\[
			\big|N_\R(v)\cap (N_\R(r)\setminus N_\B(b))\big|
			\geq 
			\frac{p^2n}{25}\,.
		\]
		\item\label{it:b} If $x\in X$, then it follows from~\eqref{eq:prefX}
		that 
		\[
			\big|N_{\rho(x)}(x)\cap J\big| \geq \frac{p^2n}{400}\,.
		\]
		\item\label{it:c} If $y_i\in Y$ is canonically connected in colour $\rho(y_i)$, 
			then it follows from~\eqref{eq:prefYs}
		that 
		\[
			\big|\big(N_{\rho(y_i)}(y_i)\setminus Y_i\big)\cap \rho^{-1}(\rho(y_i))\big| \geq \frac{p^2n}{400}\,.
		\]
              \item\label{it:d} If $y_i\in Y$ is not canonically
                connected in either colour, then by~\eqref{eq:prefYns}
                the bipartite subgraph of $G$ with edges of colour
                $\rho(y_i)$ induced across the two types of mismatched
                vertices in $N(y_i)\setminus Y_i$, which we denote by
                \[
                  G_{\rho(y_i)}\Big[\big(N_\B(y_i)\cap\rho^{-1}(\red)\big)\setminus
                  Y_i,\,
                  \big(N_\R(y_i)\cap\rho^{-1}(\blue)\big)\setminus
                  Y_i\Big]\,,
              \]
              contains at least $p^2n/200$ vertices of degree at least
              $p^2n/200$.
	\end{enumerate}
	
        \subsection*{Second stage: finalising the choices} We shall
        now assign \textit{final} colours to the vertices of~$G$ to
        establish~$\phi\ra\Pi_2$.  More precisely, we shall
        define a function $f\colon V\to\{\red,\blue\}$
        with~$f(r)=\red$ and~$f(b)=\blue$ so that
        \begin{equation}
          \label{eq:aim}
          G_\R[f^{-1}(\red)]\qand G_\B[f^{-1}(\blue)]
          \quad\text{are connected.}
        \end{equation}
        Since our process for defining~$f$ is somewhat lengthy, we
        first give a rough outline.  The assignment of the
        colours~$f(v)$ for~$v\in V$ will be achieved in two rounds.

        The function~$f$ will start as a partial function with
        domain~$\dom f$ close to half of~$V$.  At this stage, on most
        of~$\dom f$, we shall have~$f\equiv\rho$, but for about half
        of the joker vertices~$v$ we shall `switch' and pick as~$v$'s
        final colour the colour opposite to its preferred colour:
        $f(v)=\overline\rho(v)$, where~$\overline\rho(v)=\red$
        if~$\rho(v)=\blue$ and~$\overline\rho(v)=\blue$
        if~$\rho(v)=\red$.  At this point, we shall have that
        \begin{equation}
          \label{eq:rem1}
          G_\R[f^{-1}(\red)\setminus Y]\qand
          G_\B[f^{-1}(\blue)\setminus Y]
          \quad\text{are connected.}
        \end{equation}
        (The comment above is somewhat similar to Remark~\ref{rem:1}.)
        From this point in the proof onwards, we shall
        increase~$\dom f$ in smaller steps.  It will be convenient to
        say that, once~$f(v)$ has been defined for a vertex~$v$, the
        vertex~$v$ has been \textit{finalised}.  Also, we remark that,
        once we choose the value of~$f(v)$ for some~$v$, we shall not
        change it afterwards.

        What we discussed above corresponds to most of the first
        round.  However, still in the first round, we shall have to
        finalise some other vertices~$z\notin\dom f$,
        setting~$f(z)=\overline\rho(z)$ so that we can
        improve~\eqref{eq:rem1} by replacing~$Y$ by some substantially
        smaller subset~$Y'$ (in fact,~$|Y'|$~will roughly
        be~$|Y|/2$).  This final stage of the first round is
        encapsulated in Claim~\ref{claim:Z1} below.

        In the second round of our procedure defining~$f$, we pick the
        colour of the remaining vertices~$v\in V\setminus\dom f$.
        This process will be guided by the vertices in~$Y'$.  This
        concludes our outline of what comes next, and we proceed to
        define~$f$ precisely.

	Consider a random bipartition $Z_0\dcup Z_1=V\setminus\{r,b\}$
        where every vertex $v\in V\setminus\{r,b\}$ is included
        independently with probability~$1/2$ into $Z_0$ or $Z_1$.
        Since $p^2n\gg\log n$ we deduce from~\ref{it:a}--\ref{it:d}
        that with positive probability there exists a partition
        $Z_0\dcup Z_1=V\setminus\{r,b\}$ such that for every vertex
        in~$\big(J\setminus N_\R(r)\cap N_\B(b)\big)\dcup X\dcup Y$
        the following holds:
	\begin{enumerate}[label=\aplabel]
		\item\label{it:a'} If $v\in J\setminus (N_\R(r)\cap N_\B(b))$, then		
			$N_\R(v)\cap (N_\R(r)\setminus N_\B(b))\cap Z_0\neq \emptyset$.
		\item\label{it:b'} If $x\in X$, then $N_{\rho(x)}(x)\cap J\cap Z_\xi\neq\emptyset$
		for both~$\xi\in\{0,1\}$.
		\item\label{it:c'} If $y_i\in Y$ is canonically connected in colour $\rho(y_i)$, 
			then 
		\[
			\big(N_{\rho(y_i)}(y_i)\setminus Y_i\big)\cap \rho^{-1}(\rho(y_i))\cap Z_0\neq\emptyset\,.
		\]
              \item\label{it:d'} If $y_i\in Y$ is non-canonically
                connected, then there exists an edge
                $\{u,v\}\in E(G_{\rho(y_i)})$ such that
			\begin{align*}
				u&\in\big(N_{\overline{\rho}(y_i)}(y_i)\cap\rho^{-1}(\rho(y_i))\cap Z_0\big)\setminus Y_i
			\intertext{and}
				v&\in\big(N_{\rho(y_i)}(y_i)\cap\rho^{-1}(\overline{\rho}(y_i))\cap Z_1\big)\setminus Y_i\,,
			\end{align*}
			where, we recall, $\overline{\rho}(y_i)$ denotes the colour different from $\rho(y_i)$. 
	\end{enumerate}
	Note that we
	considered at most $n$ such sets 
	of size $\Omega(p^2n)$ in~\ref{it:a}--\ref{it:c} and $O(n\cdot p^2n)=O(n^2)$ stars of size 
	$\Omega(p^2n)$ in~\ref{it:d}. Consequently, the existence of a
        partition $Z_0\dcup Z_1=V\setminus\{r,b\}$  
	satisfying~\ref{it:a'}--\ref{it:d'} indeed follows from $p^2n\gg \log n$ and a standard application 
	of Chernoff's inequality.  We fix such a partition for the
        remainder of the proof.
	
	After this preparatory random splitting we start defining the final colours $f(v)$ for $v\in V$.
        We start with~$r$ and~$b$ in the obvious manner: 
	\[
		f(r)=\red
		\qqand 
		f(b)=\blue\,.
	\]
	Moreover, every $v\in Z_0$ will be assigned its preferred
        colour and every joker vertex in~$Z_1$ will be assigned the
        opposite of its preferred colour:
        \begin{equation}
          \label{eq:f_def_i}
          f(v)=
          \begin{cases}
            \rho(v),          &\text{if }v\in Z_0\\
            \overline\rho(v), &\text{if }v\in J\cap Z_1\,.
          \end{cases}
        \end{equation}
        Note that we now have~$\dom f=Z_0\cup J$.  We have thus
        committed ourselves in which of the two monochromatic
        subgraphs in~\eqref{eq:aim} the vertices in~$Z_0\cup J$ are.
        We mention that, owing to the definition of~$\rho$, our
        \textit{tendency} is to set~$f(v)=\rho(v)$ for the remaining
        vertices~$v\in Z_1\setminus J=V\setminus\dom f$.  However, if
        we do this blindly, assertion~\eqref{eq:aim} will not hold.
        In what follows, we shall ``switch'' the colour of
        \textit{some} vertices~$v\in Z_1\setminus J$ and we shall
        set~$f(v)=\overline\rho(v)$ (in the same way we did for the
        vertices in~$Z_1\cap J$).  Such switchings will basically be
        forced on us as we proceed to increase~$\dom f$ in our proof.

        Before we continue, we make the following remark, which is
        closely related to the discussion in Remark~\ref{rem:1}.
	
    \begin{remark}\label{rem:2}
      Suppose every vertex of~$Y$ is canonically connected in some
      colour.  Then properties~\ref{it:a'}--\ref{it:c'} and an
      inductive argument would show that~\eqref{eq:aim} holds for our
      current function~$f$.
      \hfill$\sqbullet$
    \end{remark}

    Remark~\ref{rem:2} above deals with the lucky case in which every
    vertex of~$Y$ is canonically connected in some colour.  In general,
    there will be vertices~$y$ in~$Y$ that are non-canonically connected.  
    Such vertices~$y$ will force us to
    set~$f(z)=\overline\rho(z)$ for some $z\in Z_1\setminus J$ also.
    This is made precise in the following claim.
	
    \begin{claim}\label{claim:Z1}
      There exists a subset $Z'_1\subseteq Z_1\setminus J$ for which
      the following holds.  If we set
      \begin{equation}\label{eq:finalZ1}
        f(z)=\overline\rho(z)
      \end{equation}
      for every $z\in Z'_1$, then~$\dom f=Z_0\cup J\cup Z_1'\cup\{r,b\}$
      and~\eqref{eq:aim} holds.
    \end{claim}
    \begin{proof}
      We first consider our current function~$f$
      with~$\dom f=Z_0\cup J$ and verify the following fact.

      \begin{fact}
        \label{fact:1}
        Assertion~\eqref{eq:rem1} holds for~$f$.
      \end{fact}
      \begin{proof}
        We consider the different types of vertices encountered in the first stage separately.
	First we recall that vertices $v\in(N_\R(r)\cup N_\B(b))\setminus J$ are directly connected 
	to their respective roots in colour $\rho(v)$. Consequently, all vertices  
	\[
		v\in Z_0\cap\big((N_\R(r)\cup N_\B(b))\setminus J\big)
	\]
	are in the same component in $G_{f(v)}=G_{\rho(v)}$ as the respective root. 
	
	Secondly, we consider the joker vertices. Note that nothing needs to be shown 
	for the vertices $v\in N_\R(r)\cap N_\B(b)$ as they are directly connected to both roots
	in the appropriate colour and, hence, for these vertices it does not matter which final 
	colour~$f(v)$ is assigned to them. Moreover, for every 
	joker vertex $v\in J\cap Z_0$ we have $f(v)=\rho(v)=\blue$ and since~$J\subseteq N_\B(b)$, 
	these vertices are also directly connected to~$b$ in~$G_{f(v)}$.
	For the remaining joker vertices 
	$v\in (J\setminus (N_\R(r)\cap N_\B(b)))\cap Z_1$ we appeal to~\ref{it:a'}.
	Owing to~\eqref{eq:f_def_i} the final colour~$f(v)$
        of~$v$ is $\red$ and, by~\ref{it:a'}, every such~$v$
	has at least one $\red$ neighbour $u$ in $Z_0\cap
        (N_\R(r)\setminus N_\B(b))\subset\dom f$.  Since 
	we have $f(u)=\rho(u)=\red$, the vertex~$v$ is also connected
        to~$r$ in~$G_\R[f^{-1}(\red)]$.  
	
	Next we move to the vertices $x$ in $X\cap Z_0$ and for those vertices we appeal to~\ref{it:b'}.
	If~$f(x)=\rho(x)=\red$, then~\ref{it:b'} applied with $\xi=1$ tells us that $x$ has at least one 
	$\red$ neighbour $v\in J\cap Z_1\subset\dom f$ (i.e.,
        there is~$v\in N_\R(x)\cap J\cap Z_1\subset\dom f$).  Since~$\rho(v)=\blue$ and, therefore, 
	$f(v)=\red$ (see~\eqref{eq:f_def_i}), we infer 
	from the discussion above that $x$ is connected by a $\red$ path to~$r$ in 
	$G_\R[f^{-1}(\red)]$.  If $f(x)=\rho(x)=\blue$, 
	then the same argument with~\ref{it:b'} applied with $\xi=0$ yields 
	that $x$ is connected by a $\blue$ path to~$b$ in~$G_\B[f^{-1}(\blue)]$.
      \end{proof}

      We shall now improve Fact~\ref{fact:1}: we shall prove
      that~\eqref{eq:aim} holds for~$f$, as long as we enlarge the
      domain of~$f$ suitably.  Roughly speaking, what we have to do is
      to `attach' the vertices in~$Y\cap Z_0$ to~$G_\R[f^{-1}(\red)]$
      or to~$G_\B[f^{-1}(\blue)]$, with edges (or paths) of the
      correct colour.  We shall proceed vertex by vertex following the
      order~$y_1,\dots,y_m$ (ignoring vertices outside~$Z_0$).  For
      certain vertices~$y_i\in Y\cap Z_0$, this will be a matter of
      realizing that a suitable edge is there; for other
      vertices~$y_i\in Y\cap Z_0$, we may have to finalise a
      vertex~$v\in Z_1\setminus J$: every time we do this, we add~$v$
      to~$Z'_1$ and~$Z'_1$ increases (we start with~$Z'_1=\emptyset$).
      Let us remark that, when we put a vertex~$v$ in~$Z'_1$ and
      finalise it, we shall set~$f(v)=\overline\rho(v)$.  At the end
      of this process, assertion~\eqref{eq:aim} will hold for our~$f$.
      We now go into the details of this process.

      We proceed inductively and use the fixed ordering of the
      vertices in~$Y$.  At first we have~$\dom f=Z_0\cup J$
      and~$Z'_1=\emptyset$.  Suppose now that~$1\leq i\leq m$,
      $y_i\in Y\cap Z_0$, and the vertices in some
      set~$Z'_1\subset Z_1\setminus J$ have been finalised
      with~$f(z')=\overline\rho(z')$ for every~$z'\in Z'_1$.  Suppose
      further that
      \begin{equation}
        \label{eq:aim_i}
        G_\R[f^{-1}(\red)\setminus Y_i]\qand
        G_\B[f^{-1}(\blue)\setminus Y_i]
        \quad\text{are connected.}
      \end{equation}
      We now finalise~$y_i$ analysing two cases.

      \textit{Case 1}.  If $y_i$ is canonically connected in
      colour~$\rho(y_i)$, then we proceed in a similar manner as for
      the vertices in~$X\cap Z_0$.  In fact, it follows
      from~\ref{it:c'} that in this case~$y_i$ has a neighbour
      $v\in N_{\rho(y_i)}(y_i)\setminus Y_i$ such that
      \[
        f(y_i)=\rho(y_i)=\rho(v)=f(v)\,,
      \] 
      where the first and last identities follow from the fact
      that~$y_i\in Z_0$ and~$v\in Z_0$.  Since
      $v\in(\dom f)\setminus Y_i$, the inductive
      assumption~\eqref{eq:aim_i} and the edge~$\{y_i,v\}$ of
      colour~$\rho(y_i)=f(y_i)$ tells us
      that~$G_{f(y_i)}[f^{-1}(f(y_i))\setminus Y_{i+1}]$ is
      connected, completing the induction step in this case. 
      
      \textit{Case 2}.  We now consider the case in which
      $y_i\in Y\cap Z_0$ is non-canonically connected.  
      In this case we may have to enlarge the set~$Z'_1$ by
      adding some vertex~$v$, but we will ensure the monochromatic
      connection for~$v$ as well.  By symmetry we may assume that the
      preferred colour of $y_i$ is $\red$ and, since $y_i\in Z_0$, we
      have
      \[
	\rho(y_i)=f(y_i)=\red.
      \]
      Let $\{u,v\}$ be the edge given by~\ref{it:d'} of colour $\rho(y_i)=\red$. In particular, 
      \[
	u\in(\rho^{-1}(\red)\cap Z_0)\setminus Y_i\,.
      \]
      Therefore, we already finalised~$u$ and~$f(u)=\red$.
      Furthermore, by the induction assumption~\eqref{eq:aim_i},
      we already know that $u$ is connected to~$r$ 
      by a $\red$ path in $G_\R[f^{-1}(\red)\setminus Y_i]$. 
      Furthermore, 
      \[
	v\in(\rho^{-1}(\blue)\cap Z_1)\setminus Y_i\,.
      \]
      In case $v$ has already been put into $Z'_1$ in this inductive
      process, then we already ``switched'' its colour and finalised
      it to be $\red$.  If not, then we add $v$ to $Z'_1$ at this
      point and finalise it with~$f(v)=\red$.  In any case we may use
      the $\red$ edges $\{u,v\}$ and $\{v,y_i\}$ to connect the
      vertices $v$ and $y_i$ to~$r$ by a $\red$ path in
      $G_\R[f^{-1}(\red)\setminus Y_{i+1}]$.  This concludes our
      induction step in this case and completes 
      the proof of Claim~\ref{claim:Z1}.
    \end{proof}

	It is left to finalise the colours of the vertices in
        $Z_1\setminus(J\cup Z'_1)$. Again we consider the vertices 
	separately, according to their membership in the sets $N_\R(r)\cup N_\B(b)$, $X$ or $Y$. This time
	we reverse the order in which we deal with the vertices and begin with the vertices in~$Y$. 
	
        We iterate over the vertices
        in~$Y\cap\big(Z_1\setminus(J\cup Z'_1)\big)$ in
        \textit{reverse order}: $y_m,\dots,y_1$.  In this process, we
        shall finalise the vertices~$y\notin\dom f$ that we encounter
        one by one.  For some~$y$, it may happen that some other
        vertex~$v\notin \dom f$ has to be finalised also.  When this
        does happen, we shall say that~$v$ has been \textit{pulled
          forward} and we shall always let~$f(v)=\overline\rho(v)$,
        that is, we shall switch the colour of~$v$.  We now describe
        this inductive process precisely.

        Let~$i\in[m]$ be the largest index such that~$y_i$ has not
        been finalised yet. We proceed as in the proof of
        Claim~\ref{claim:Z1}. If $y_i$ is canonically connected in
        colour~$\rho(y_i)$, then we set~$f(y_i)=\rho(y_i)$. Owing
        to~\ref{it:c'} there exists a neighbour in
        $v\in N_{\rho(y_i)}(y_i)\cap Z_0$ with preferred colour
        $\rho(v)=\rho(y_i)$. Since $v\in Z_0$, in fact, we already
        have~$f(v)=\rho(v)$ and, in view of Claim~\ref{claim:Z1}, the
        vertex~$v$ is connected to the root of the corresponding
        colour with an $f(v)$-coloured path.  Extending this path with
        the edge $\{v,y_i\}$ of colour $f(y_i)=f(v)$ to~$y_i$
        concludes this case.
	
	Next we consider the case in which $y_i$ is non-canonically
        connected.  In this case we also set
        $f(y_i)=\rho(y_i)$, but we shall make use of the edge
        $\{u,v\}$ of colour $\rho(y_i)$ guaranteed by~\ref{it:d'}.
        Since $u\in\rho^{-1}(\rho(y_i))\cap Z_0$, the colour~$f(u)$ of~$u$
        was chosen in the first round of the second stage already, and
        we have $f(u)=\rho(u)=\rho(y_i)=f(y_i)$.  Claim~\ref{claim:Z1}
        then tells us that there is a path from $u$ to the root of
        colour~$f(y_i)$ in $G_{f(y_i)}[f^{-1}(f(y_i))]$.  On the other
        hand, the vertex $v$ is contained in~$Z_1\setminus Y_i$ and
        $\rho(v)=\overline{\rho}(y_i)$.  We now proceed differently
        depending on whether or not~$v\in\dom f$.

	If $f(v)$ has not been set already, then we pull this vertex
        forward and finalise its colour opposite to its preferred
        colour, i.e., we treat the vertex~$v$ as the vertices
        $z\in Z'_1$ in~\eqref{eq:finalZ1}.  As a result we obtain
        $f(v)=f(y_i)$ and, since the edges $\{u,v\}$ and $\{v,y_i\}$
        are coloured $f(y_i)$, we ensure the invariant that $y_i$ and
        $v$ are connected to the root of colour $f(y_i)=f(v)$ in~$G_{f(y_i)}[f^{-1}(f(y_i))]$.
	
	If $f(v)$ has already been set before, then either
        (\textit{a})~$v\in(J\cap Z_1)\cup Z'_1$ and,
        by~\eqref{eq:f_def_i} and~\eqref{eq:finalZ1},
        the final colour of~$v$ was set
        opposite to its preferred colour, or else (\textit{b})~$v$ was
        pulled forward because of some other vertex $y_j$ with $j>i$.
        However, also in case~(\textit{b}), the colour of~$v$ was
        switched and we have
        $f(v)=\overline{\rho}(v)=\rho(y_i)=f(y_i)$.  Consequently, in
        both cases~(\textit{a}) and~(\textit{b}), we already
        established a connection of~$v$ to the root of colour~$f(v)$
        in $G_{f(v)}[f^{-1}(f(v))]$.  Extending this path with the
        edge $\{v,y_i\}$ of colour $f(v)=f(y_i)$ establishes the
        required connection for~$y_i$.  Here, we are using that
        $v\in Z_1\setminus Y_i$ being in~$(J\cap Z_1)\cup Z'_1$ or
        being pulled forward are the only reasons that could have led
        to the finalisation of~$v$.  This concludes the discussion of
        the vertices in~$Y$.
	
	Next we move to the vertices in~$X$.  Note that some of the
        vertices $x\in X\cap(Z_1\setminus(Z'_1\cup J))$ may have
        been pulled forward to attach some~$y\in Y$ that is 
        non-canonically connected.  However, such a
        vertex~$x$ was finalised and the desired connection to the
        root of colour~$f(x)$ was established on that occasion.

        For every vertex $x\in X\setminus\dom f$, we simply set
	\[
		f(x)=\rho(x)\,.
	\]
	By~\ref{it:b'} there exist vertices $u\in J\cap Z_0$ and
        $v\in J\cap Z_1$, both contained in $N_{f(x)}(x)$.
        Since all joker vertices were assigned preferred colour
        $\blue$ and~$u\in Z_0$, we have $f(u)=\rho(u)=\blue$.  On
        the other hand, since $v\in J\cap Z_1$, we infer
        from~\eqref{eq:f_def_i} that $f(v)=\red$.  Hence, no matter
        what $f(x)$ is, there exists a path from $x$ to the root of
        colour $f(x)$ in $G_{f(x)}[f^{-1}(f(x))]$.
	
	It is left to finalise the remaining vertices
        $v\in(N_\R(r)\cup N_\B(b))\cap (Z_1\setminus(Z'_1\cup J))$ that
        have not been pulled forward.  Obviously, setting $f(v)=\red$
        if $v\in N_\R(r)$ and $\blue$ otherwise connects~$v$ to the
        root in the appropriate colour. 
	
	Summarising, we finalised every vertex $v\in V$ in such a way
        that $v$ is connected to the root of colour~$f(v)$ in
        $G_{f(v)}[f^{-1}(f(x))]$ (i.e., assertion~\eqref{eq:aim}
        holds).  Consequently, the partition
	\[
		f^{-1}(\red)\dcup f^{-1}(\blue)=V
	\]
	shows that $\phi\ra\Pi_2$, which concludes the proof of
        Proposition~\ref{prop:ex}. 
\end{proof}

\section{Extension for more colours}\label{sec:counter}
In this section we show that Theorem~\ref{thm:main} does not extend in the expected way to 
more than two colours. For $r\geq 2$ and a graph $G=(V,E)$ we write $G\ra\Pi_r$ if for every 
$r$-colouring of $E$ there exist $r$ monochromatic trees $T_1,\dots,T_r\subseteq G$ such that 
\[
	V(T_1)\dcup \dots\dcup V(T_r)=V\,.
\] 
Since it is not hard to obtain a lower bound construction for the threshold $p=p(n)$ for $G(n,p)\ra\Pi_r$ as long as there are $r$ vertices with no joint neighbour, one may wonder whether
$p=p(n)=\left(\frac{r\ln n}{n}\right)^{1/r}$ is the sharp threshold for this property. Such a conjecture was indeed put forward by Bal and DeBiasio~\cite{BaDe}*{Conjecture~8.1}. However, it was noted by Ebsen, Mota, and Schnitzer~\cite{EMS} that for $r\geq 3$ the threshold is larger and we include their example below.

\begin{proposition}\label{prop:counter}
	For any integer $r\geq 3$ and $p=p(n)\ll \big(\frac{\ln n}{n}\big)^{\frac{1}{r+1}}$ a.a.s.\ $G\in G(n,p)$ 
	fails to satisfy $G\ra \Pi_r$.
	 
\end{proposition}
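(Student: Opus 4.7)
The plan is to exhibit, a.a.s.\ in $G=G(n,p)$, an $r$-colouring of $E(G)$ admitting no partition of $V(G)$ into $r$ monochromatic trees. Since $p\ll \bigl(\tfrac{\ln n}{n}\bigr)^{1/(r+1)}$ gives $p^{r+1}n=o(\ln n)$, the probability that a fixed $(r+1)$-tuple of vertices has no common neighbour is $(1-p^{r+1})^{n-r-1}\geq n^{-o(1)}$. Hence the expected number of $(r+1)$-subsets of $V(G)$ with no common neighbour is at least $\binom{n}{r+1}n^{-o(1)}\to\infty$, and a standard second moment calculation (disjoint $(r+1)$-tuples produce independent events, and overlapping pairs contribute negligibly) yields concentration. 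So a.a.s.\ there are $r+1$ vertices $v_1,\dots,v_{r+1}\in V(G)$ with no common neighbour.

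With $v_1,\dots,v_{r+1}$ fixed, one can assign to each $w\in V(G)\setminus\{v_1,\dots,v_{r+1}\}$ a label $\psi(w)\in[r+1]$ with $v_{\psi(w)}\not\sim w$, and then design an $r$-colouring $\phi\colon E(G)\to[r]$ (after identifying~$r+1$ with some chosen colour, say~$1$) guided by~$\psi$: every edge incident to~$v_j$ is coloured so as to avoid colour~$j$, and every edge between ordinary vertices is coloured as a function of the $\psi$-labels of its endpoints. The goal is the following \emph{separation property}: in every colour $i\in[r]$, the vertices $v_1,\dots,v_{r+1}$ lie in $r+1$ pairwise distinct connected components of the monochromatic subgraph $G^{\phi}_i=(V(G),\phi^{-1}(i))$.

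Once separation is in hand, the conclusion $G\nra\Pi_r$ is a pigeonhole argument. Any partition of $V(G)$ into $r$ monochromatic trees $T_1,\dots,T_r$ places each $V(T_k)$ inside a single connected component of some $G^{\phi}_{i_k}$, and by separation that component contains at most one of the $v_j$'s. Hence $T_1,\dots,T_r$ together cover at most $r$ of the $r+1$ obstruction vertices, contradicting the assumption that they form a partition of $V(G)$.

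The main obstacle is the construction of $\phi$ so that the separation property holds for every pair $(v_j,v_{j'})$ in every colour class simultaneously. Because only $r$ colours are available but $r+1$ labels must be accommodated, some identification is forced, and this threatens to merge the stars around two distinct $v_j$'s in one colour through vertices of high joint neighbourhood. Controlling this merging is the heart of the argument, and relies on a careful case analysis of the $\psi$-labels along candidate monochromatic paths connecting two obstruction vertices, exploiting the fact that under $p\ll\bigl(\tfrac{\ln n}{n}\bigr)^{1/(r+1)}$ the vertices that are adjacent to many of the $v_j$'s at once are sparse enough to be absorbed into a consistent choice of labelling.
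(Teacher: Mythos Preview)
Your setup is sound: the second-moment argument does a.a.s.\ produce $r+1$ vertices with no common neighbour, and your pigeonhole finish is correct once the separation property is in hand. But the proof has a genuine gap: you do not construct~$\phi$. You explicitly call the construction ``the heart of the argument'' and then offer only a heuristic (``careful case analysis of the $\psi$-labels\dots\ vertices adjacent to many of the $v_j$'s at once are sparse''). This is where all the content lies, and nothing you wrote pins down a colouring with the separation property. Two smaller points: you never require $v_1,\dots,v_{r+1}$ to be independent, yet any edge $v_jv_{j'}$ immediately destroys separation in whatever colour it receives; and the appeal to ``sparsity of vertices adjacent to many $v_j$'s'' is not what ends up being used---only the single hypothesis that \emph{no} vertex is adjacent to all $r+1$ of them is needed.

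It is also worth noting that the concrete hint you give points in an awkward direction. You propose to colour edges at~$v_j$ so as to \emph{avoid} colour~$j$; this isolates~$v_j$ in colour~$j$ but leaves, in each colour~$i$, up to $r$ of the $v_j$'s with live edges, and you then have to separate all of those pairs simultaneously. The paper does the opposite: it singles out one vertex $z=v_{r+1}$ and, for $j\le r$, colours \emph{every} edge at $v_j$ with colour~$j$. Then in colour~$i$ all of $v_1,\dots,v_r$ except $v_i$ are isolated for free, and the only nontrivial task is to block monochromatic $z$--$v_i$ paths for each~$i$. This is achieved by assigning to each $x\in N(v_1)\cup\dots\cup N(v_r)$ a missing colour $\mc(x)$ (an index $j$ with $x\notin N(v_j)$, guaranteed by the no-common-neighbour property), colouring $zx$ with $\mc(x)$, and colouring the remaining edges so that~$\mc(x)$ never reappears at~$x$. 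The asymmetric treatment of~$z$ is exactly what makes the construction go through; your symmetric scheme would require substantially more work to complete.
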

\begin{proof} For a simpler presentation we only prove the proposition for $r=3$, since the 
adjustments for $r>3$ are rather straightforward.
Suppose $p=p(n)\ll \big(\frac{\log n}{n}\big)^{1/4}$.
We show that a.a.s.\ $G=(V,E)\in G(n,p)$ admits a $3$-colouring of $E$ with colours $\red$, $\blue$, 
and $\green$ such that there is no partition $V(G)=V(T_1)\dcup V(T_2)\dcup V(T_3)$ with monochromatic 
trees $T_1$, $T_2$, $T_3\subseteq G$.
 
By our choice of~$p$ a.a.s.\ there are four vertices $r$, $b$, $g$, and $z$ that are independent in~$G$ and that have 
no common neighbour, i.e.,  
\[
	N(z)\cap N(r)\cap N(b)\cap N(g)=\emptyset\,.
\]
Below we write $N(r,g,b)$ for the joined 
neighbourhood~$N(r)\cap N(g)\cap N(b)$.

We now describe a colouring $\phi\colon E\to\{\red,\blue,\green\}$ with the desired property. 
The edges incident to $r$ are coloured $\red$, those incident to $b$ are coloured $\blue$, 
and those incident to $g$ are coloured $\green$. This choice ensures that we need at least three 
monochromatic trees to partition $V$ and below we will ensure that $z$ cannot be connected to any of these three trees. 

Next we colour the edges induced in 
\[
	X=N(r)\cup N(b)\cup N(g)\,. 
\]
in such a way that for every vertex $x\in X\setminus N(r,b,g)$, the edges incident to $x$ are coloured with at most 
two of the three colours and we fix one of the ``missing colours'' that do not appear on edges incident to $x$, 
which we denote by $\mc(x)$.
The following 
colourings have this property:

For every edge we list at least one \emph{allowed} colour and if 
an edge is assigned to more than one allowed colour, then one may pick arbitrarily one of the allowed colours
\begin{itemize}
	\item edges within $N(r)$ are allowed to be coloured $\red$, within $N(b)$ are allowed to be 
		coloured $\blue$, and within $N(g)$ are allowed to be coloured $\green$; 
	\item edges 
		between $N(r)\setminus N(b)$ and $N(b)\setminus \big(N(r)\cup N(g)\big)$ are coloured $\red$,
		between $N(b)\setminus N(g)$ and $N(g)\setminus  \big(N(b)\cup N(r)\big)$ are coloured $\blue$, and 
		between $N(g)\setminus N(r)$ and $N(r)\setminus  \big(N(g)\cup N(b)\big)$ are coloured $\green$.
\end{itemize}

Then we colour the edges incident with $z$. 
Edges $zx$ with $x\in X\setminus N(r,b,g)$ are coloured with colour $\mc(x)$.
Note that from the definition of $\mc(x)$, if $zx$ is coloured green, then there is no monochromatic green path between $g$ and $x$, and similar for the symmetric cases.

Let $Y$ be the set of vertices not considered so far, i.e., $Y=V(G)\setminus (X\cup \{r,b,g,z\})$.
It remains to colour the edges incident to $Y$.
We will prevent $z$ to be connected by a monochromatic path to $r$, $b$, or $g$ using vertices from~$Y$.
For that, we give colour blue to the edges $zy$ with $y\in Y$, while edges between $N(r,b,g)$ 
and $Y$ and within $Y$ are coloured~$\red$.
For the edges~$yx$ with $y\in Y$ and $x\in X\setminus N(r,b,g)$, the colours $\{\red,\green\}\setminus \{\mc(x)\}$ are allowed.
Since for every $x\in X\setminus N(r,b,g)$ and every $y\in Y$ the colours of the edges~$zx$ and $yx$ are different, and the only edge incident to $x$ that has colour $\mc(x)$ is $zx$, there is no monochromatic path from $x$ to $r$, $b$ or $g$ containing vertices from $Y$.
Moreover, one can check that for any colouring $\phi$ as described, it is impossible to connect $z$ by a monochromatic path with $r$, $b$, or $g$ and, hence $\phi$ has the desired property. 
\end{proof}

It would be interesting to determine the threshold for $G(n,p)\ra\Pi_r$ for $r\geq 3$ and to decide if the lower bound in Proposition~\ref{prop:counter} is optimal.
We remark that the construction given in Proposition~\ref{prop:counter} also works for covering (instead of partitioning) the vertices 
of~$G(n,p)$ with monochromatic trees.

\subsection*{Acknowledgement} The first author thanks Louis DeBiasio for introducing 
him to the problems considered in~\cite{BaDe}.

\BibSpec{personal}{%
  +{}{\PrintAuthors} {author}
  +{,}{ \textit{personal communication}} {transition}
  +{,}{ } {date}
  +{.}{ } {transition}
}
 
\begin{bibdiv}
\begin{biblist}

\bib{BaDe}{article}{
   author={Bal, Deepak},
   author={DeBiasio, Louis},
   title={Partitioning random graphs into monochromatic components},
   journal={Electron. J. Combin.},
   volume={24},
   date={2017},
   number={1},
   pages={Paper 1.18, 25 pages},
   issn={1077-8926},
   review={\MR{3609188}},
}

\bib{Bo98}{book}{
   author={Bollob{\'a}s, B{\'e}la},
   title={Modern graph theory},
   series={Graduate Texts in Mathematics},
   volume={184},
   publisher={Springer-Verlag, New York},
   date={1998},
   pages={xiv+394},
   isbn={0-387-98488-7},
   review={\MR{1633290}},
   doi={10.1007/978-1-4612-0619-4},
}

\bib{Bo01}{book}{
   author={Bollob{\'a}s, B{\'e}la},
   title={Random graphs},
   series={Cambridge Studies in Advanced Mathematics},
   volume={73},
   edition={2},
   publisher={Cambridge University Press, Cambridge},
   date={2001},
   pages={xviii+498},
   isbn={0-521-80920-7},
   isbn={0-521-79722-5},
   review={\MR{1864966}},
   doi={10.1017/CBO9780511814068},
}

\bib{BM08}{book}{
   author={Bondy, J. A.},
   author={Murty, U. S. R.},
   title={Graph theory},
   series={Graduate Texts in Mathematics},
   volume={244},
   publisher={Springer, New York},
   date={2008},
   pages={xii+651},
   isbn={978-1-84628-969-9},
   review={\MR{2368647}},
   doi={10.1007/978-1-84628-970-5},
}

\bib{Di10}{book}{
   author={Diestel, Reinhard},
   title={Graph theory},
   series={Graduate Texts in Mathematics},
   volume={173},
   edition={5},
   publisher={Springer, Berlin},
   date={2017},
   pages={xviii+428},
   isbn={978-3-662-53621-6},
   review={\MR{3644391}},
}

\bib{EMS}{personal}{
	author={Ebsen, Oliver},
	author={Mota, Guilherme Oliveira},
	author={Schnitzer, Jakob},
	date={2017},
}

\bib{Fr99}{article}{
   author={Friedgut, Ehud},
   title={Sharp thresholds of graph properties, and the $k$-sat problem},
   note={With an appendix by Jean Bourgain},
   journal={J. Amer. Math. Soc.},
   volume={12},
   date={1999},
   number={4},
   pages={1017--1054},
   issn={0894-0347},
   review={\MR{1678031 (2000a:05183)}},
   doi={10.1090/S0894-0347-99-00305-7},
}

\bib{JLR00}{book}{
   author={Janson, Svante},
   author={{\L}uczak, Tomasz},
   author={Ruci{\'n}ski, Andrzej},
   title={Random graphs},
   series={Wiley-Interscience Series in Discrete Mathematics and
   Optimization},
   publisher={Wiley-Interscience, New York},
   date={2000},
   pages={xii+333},
   isbn={0-471-17541-2},
   review={\MR{1782847}},
   doi={10.1002/9781118032718},
}

\end{biblist}
\end{bibdiv}

\end{document}